\documentclass[11pt]{article}

\usepackage{amsmath,amsthm,amssymb,amscd}

\usepackage{float}
\usepackage[caption = false]{subfig}
\usepackage[final]{graphicx}

\usepackage{xcolor}
\usepackage{todonotes}
\usepackage[colorlinks,citecolor = red,urlcolor=blue]{hyperref}

\def \RR {{\mathbb{R}}}
\def \EE {{\mathbb{E}}}
\def \VV {{\mathbb{V}}}
\def \PP {{\mathbb{P}}}

\def \D {{\rm d}}

\def \one {{\bf 1}}

\def \tP {{\widetilde{P}}}
\def \tQ {{\widetilde{Q}}}
\def \tZ {{\widetilde{Z}}}

\def \tC {{\widetilde{C}}}
\def \tV {{\widetilde{V}}}

\def \tX {{\widetilde{X}}}
\def \tXn {{\widetilde{X}_n}}
\def \tXm {{\widetilde{X}_m}}
\def \tXnm {{\widetilde{X}_{n-1}}}
\def \tXnp {{\widetilde{X}_{n+1}}}
\def \tXfnp {{\widetilde{X}^{\rm f}_{n+1}}}
\def \tXcnp {{\widetilde{X}^{\rm c}_{n+1}}}
\def \tXfn {{\widetilde{X}^{\rm f}_n}}
\def \tXf  {{\widetilde{X}^{\rm f}}}
\def \tXcn {{\widetilde{X}^{\rm c}_n}}
\def \tXc  {{\widetilde{X}^{\rm c}}}
\def \tXcl {{\widetilde{X}^{\rm c}_{\underline{n}}}}

\def \hP {{\widehat{P}}}

\def \hY {{\widehat{Y}}}

\def \hX {{\widehat{X}}}
\def \hXn {{\widehat{X}_n}}

\def \hXnp {{\widehat{X}_{n+1}}}
\def \hXfnp {{\widehat{X}^{\rm f}_{n+1}}}
\def \hXcnp {{\widehat{X}^{\rm c}_{n+1}}}
\def \hXcpp {{\widehat{X}^{\rm c}_{n+2}}}
\def \hXfn {{\widehat{X}^{\rm f}_n}}
\def \hXf {{\widehat{X}^{\rm f}}}
\def \hXcn {{\widehat{X}^{\rm c}_n}}
\def \hXc {{\widehat{X}^{\rm c}}}
\def \hXcl {{\widehat{X}^{\rm c}_{\underline{n}}}}

\def\eps{{\varepsilon}}

\newcommand{\fracs}[2]{{\textstyle \frac{#1}{#2}}}

\newtheorem{theorem}{Theorem}[section]
\newtheorem{lemma}[theorem]{Lemma}

\newtheorem{corollary}[theorem]{Corollary}
\newtheorem{assumption}[theorem]{Assumption}

\textheight 9.25in
\textwidth  6.0in
\topmargin -0.75in
\oddsidemargin 0in

\parskip 5pt

\begin{document}

\title{Analysis of nested multilevel Monte Carlo \\ using approximate Normal random variables}

\author{
Mike Giles
\thanks{\href{mailto:mike.giles@maths.ox.ac.uk}%
{\texttt{mike.giles@maths.ox.ac.uk}}}
\and 
Oliver Sheridan-Methven
\thanks{\href{mailto:oliver.sheridan-methven@hotmail.co.uk}%
{\texttt{oliver.sheridan-methven@hotmail.co.uk}}}
}

\maketitle

\begin{abstract}
  The multilevel Monte Carlo (MLMC) method has been used for a wide variety
  of stochastic applications.  In this paper we consider its use in situations
  in which input random variables can be replaced by similar approximate random
  variables which can be computed much more cheaply.  A nested MLMC approach is
  adopted in which a two-level treatment of the approximated random variables
  is embedded within a standard MLMC application. We analyse the resulting
  nested MLMC variance in the specific context of an SDE discretisation in
  which Normal random variables can be replaced by approximately Normal
  random variables, and provide numerical results to support the analysis.
\end{abstract}

\section{Introduction}

Following the initial work of Heinrich \cite{heinrich98} on parametric
integration and Giles \cite{giles08} on stochastic differential equations
(SDEs), there has been huge development in the application of multilevel
Monte Carlo (MLMC) methods to a wide variety of stochastic modelling
contexts. This includes partial differential equations (PDEs) with
stochastic coefficients \cite{cgst11,bsz11}, stochastic PDEs \cite{gr12},
continuous-time Markov process models of biochemical reactions
\cite{ah12,ahs14},
Markov Chain Monte Carlo \cite{hss13,sst17},
nested simulation \cite{bhr15,gg19},
probability density estimation \cite{gnr15,bc16},
and reliability estimation \cite{up15,ehm16}.
A review of research on MLMC is provided by Giles \cite{giles15}.

In this paper we are concerned with the development and analysis of a new
class of MLMC methods involving approximate probability distributions.
Most numerical methods for simulating stochastic models start from random
inputs from a variety of well-known distributions: Normal, Poisson, binomial,
non-central $\chi^2$, etc.  Generating samples which have a distribution
which matches the desired distribution to within the limits of finite precision
arithmetic can be a significant part of the overall computational cost of
the simulation.
Here we consider what can be achieved if it is also possible to generate
approximate random variables (random variables with a distribution
which is only approximately correct) at a greatly reduced cost.
We will show that a nested MLMC approach can be adopted in which a two-level
treatment of the approximated random variables is embedded within a standard
MLMC application. We then analyse the MLMC variance of the resulting treatment
in the specific context of an SDE discretisation in which Normal random variables
can be replaced by approximately Normal random variables.

The most relevant prior research is the work of Giles, Hefter, Mayer and Ritter
\cite{ghmr19,ghmr19b}.  
This research was in the context of \emph{Information Based Complexity} (IBC), 
working with a complexity model which counted the number of
individual random bits, rather than viewing each standard
uniformly-distributed random
variable as having a unit cost.  Fundamental to the algorithms in these papers
was the use of quantised Normal random variables, which is the first of the
three approximations to be discussed in the next section.  Some elements of the
analysis in this paper build on the ideas and analysis in those papers, but
the context is quite different in aiming to minimise the real-world execution
cost of MLMC algorithms on modern CPUs and GPUs, and the specifics of the
proposed method are quite different in using a nested MLMC approach.

Another relevant paper is by M\"uller et al.~\cite{mss15}.
In this work they use three- and four-point approximations to the Normal
distribution, equivalent to a piecewise constant approximation of $\Phi^{-1}(U)$
on 3 or 4 intervals of a non-uniform size, chosen so that the leading moments
are the same as for the standard Normal distribution.  The way in which these
are used within the MLMC construction violates, to a small extent, the usual
telescoping summation which lies at the heart of the MLMC method, and therefore
they have to be careful to bound the magnitude of this error. This new error is
related to the fact that if $Z_1$ and $Z_2$ are unit Normal random variables,
then so too is their sum $(Z_1{+}Z_2)/\sqrt{2}$; in an SDE application, this
is important in MLMC so that the sum of two Brownian increments from timesteps
of size $h$ corresponds to a Brownian increment from a timestep of size $2h$.
However, this is no longer true when using approximate Normal distributions;
the sum $(\tZ_1{+}\tZ_2)/\sqrt{2}$ is still an approximation of a unit Normal
random number, but in general it does not come from exactly the same distribution
as $\tZ_1$ and $\tZ_2$.

Similar ideas have also been investigated by Belomestny and Nagapetyan \cite{bn17}
who avoid errors in the telescoping summation by using different approximate
distributions on each level of MLMC refinement.  However, in the present paper
we prefer to avoid these difficulties entirely by using the same approximate
distribution throughout within a nested MLMC treatment; we think this will
generalise better to different approximations well-suited to different
computer hardware, and to applications with more complex distributions.
We also consider three different kinds of approximations, some of which
will generalise better to distributions such as the non-central
$\chi^2$-distribution in which there are additional parameters which may
vary for each random number sample; in such cases a lookup table based
on quantisation could become unreasonably large. 

\section{Approximate Normal distributions}

\label{sec:approximations}

There are many ways in which approximate Normal variables can be generated.
In this paper we consider three methods, each of which can be viewed as an
approximation of the generation of Normal random variables
through the inversion of the Normal CDF function,
$\displaystyle
Z = \Phi^{-1}(U),
$
with $U$ being a uniform random variable on the unit interval $(0,1)$.
The corresponding approximations all have the form
$\displaystyle
\tZ = \tQ(U),
$
so that it is possible to generate coupled pairs $(Z,\tZ)$ 
from the same random input $U$.

The three approximations are all motivated by the different hardware features 
of modern CPUs and GPUs (manycore graphics processing units). Their analysis, 
implementation details, and resulting execution performance are explained more fully in \cite{sheridanmethven2020approximating,sheridan-methven2020thesis}.

Note that we are not concerned with the computational cost of generating the 
uniform random numbers $U$.  This is because we can follow 
Giles et al.~\cite{ghmr19b} in using a trick due to 
Bakhvalov \cite{bakhvalov64} to very efficiently 
generate a set of uniformly-distributed values $\{ U_1, U_2, \ldots\}$ 
with pairwise independence (i.e.~for any two indices $i\neq j$, $U_i$ 
is independent of $U_j$) at a cost of less than 3 computer operations, 
on average.
In essence, the procedure is very similar to the \emph{digital shift} 
used to transform Sobol' points in a randomised Quasi Monte Carlo 
computation \cite{ecuyer2016randomized}.

\begin{figure}
\begin{center}
\subfloat[Quantised approximation]{\includegraphics[width = 4.5in]{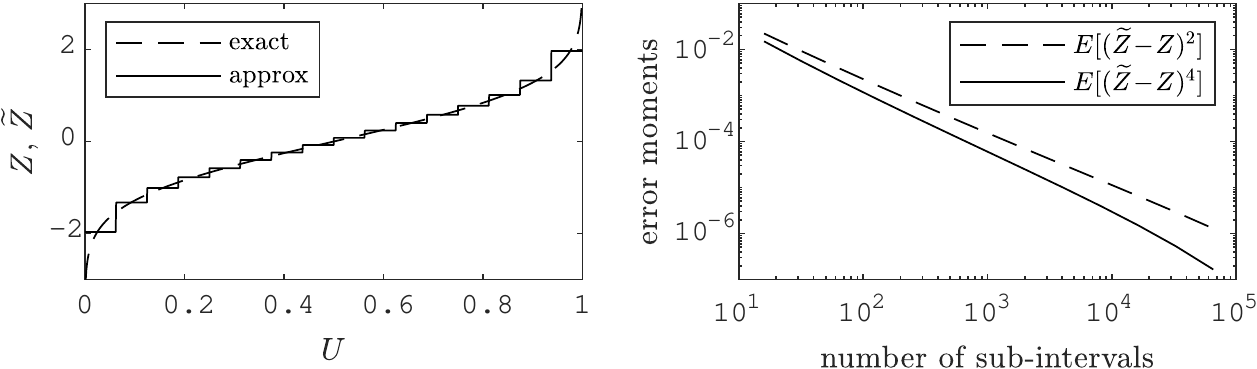}} \\
\subfloat[Piecewise linear approximation on dyadic intervals]{\includegraphics[width = 4.5in]{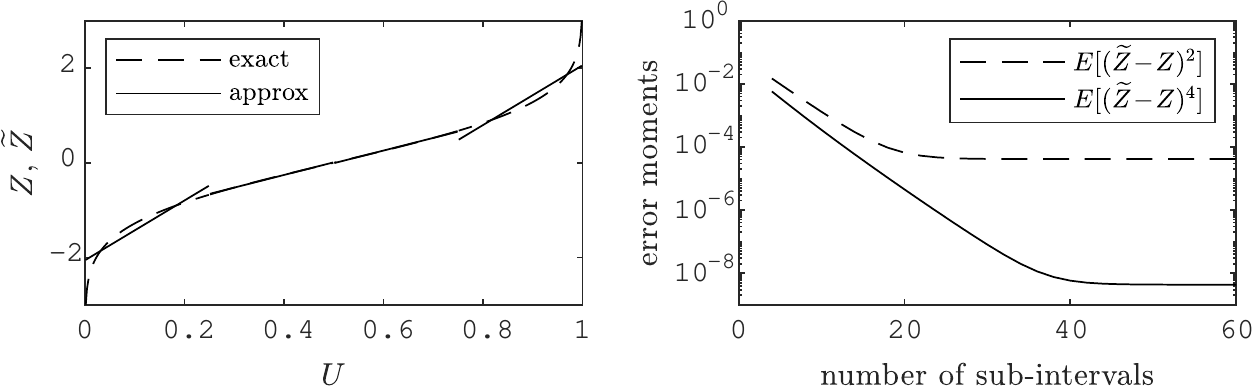}} \\
\subfloat[Polynomial approximation]{\includegraphics[width = 4.5in]{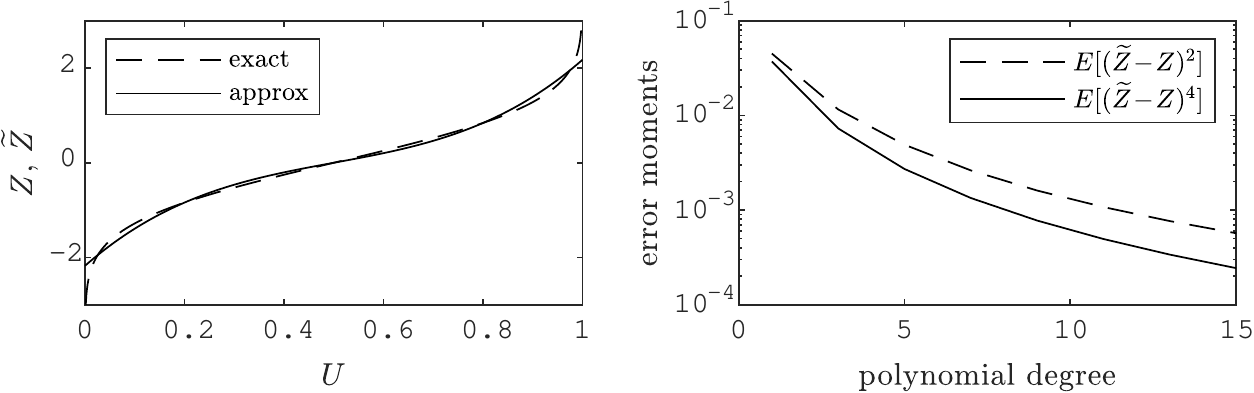}}
\end{center}
\caption{Three approximations of the inverse Normal CDF.}
\label{fig:fig1}
\end{figure}

\subsection{Quantised approximation}

The first approximation is a simple piecewise constant approximation on
$K\!=\!2^q$ intervals of size $2^{-q}$, with the value $\tQ(U)$ on the
$k$-th interval
$\displaystyle
I_k = [(k{-}1)\, 2^{-q}, k\, 2^{-q}],
$
given either by the average value of $\Phi^{-1}(U)$ on the interval, or 
alternatively the value at the mid-point.

The index $k$ corresponds to the first $q$ bits of the binary expansion 
for $U$, and this quantised approximation is the one considered by 
Giles et al.~\cite{ghmr19,ghmr19b}
since each random variable $\tZ$ can be generated based directly on 
$q$ random bits, each independently taking the value 0 or 1 with equal 
probability.  Extending their analysis, it can be proved that
\[
\EE[\, |\tZ{-}Z|^p] = o(2^{-q} ).
\]

Figure \ref{fig:fig1}(a) illustrates the approximation for $K\!=\!16$, 
and also has plots of $\EE[(\tZ{-}Z)^2]$ and $\EE[(\tZ{-}Z)^4]$ 
as a function of $K$, the number of intervals.

For $K\!=\!1024$ the mean square error (MSE) is
$\EE[(\tZ{-}Z)^2] \!\approx\! 1.5\!\times\! 10^{-4}$.
This size seems a good choice as the lookup table will fit inside the L1 
cache of a current generation Intel CPU, leading to a very efficient scalar 
implementation.

\subsection{Piecewise linear approximation on dyadic intervals}

The second approximation uses a discontinuous piecewise linear approximation
on a geometric sequence of sub-intervals.  To be specific, for a given ratio 
$\fracs{1}{2}\leq r < 1$, $K$ sub-intervals on $[0,\fracs{1}{2}]$ are defined by
\[
I_k = [\fracs{1}{2} r^{k},\fracs{1}{2} r^{k-1}], ~~ k = 1, 2, \ldots K{-}1,  ~~~~~~~
I_K = [0, \fracs{1}{2} r^{K-1}].
\]

In the particular case $r\!=\!\fracs{1}{2}$, given an input $0\!<\!U\!<\!\fracs{1}{2}$, 
the corresponding sub-interval index $k$ can be determined by computing the integer 
part of $\log_2U$, which can be implemented very efficiently due to the floating 
point format of real numbers.

On each sub-interval $I_k$, $\tQ(U)$ is defined as the least-squares linear best 
fit approximation to $\Phi^{-1}(U)$, and the approximation on $[\fracs{1}{2},1]$ 
is defined by $\tQ(U) = - \tQ(1{-}U)$.  Standard analysis of the accuracy of
piecewise linear interpolation leads to the result that
\[
\EE[\, |\tZ{-}Z|^p] = o(r^K) + O( (1{-}r)^{2p}),
\]
where the first term comes from the two end intervals $[0,\fracs{1}{2} r^{K-1}]$ 
and $[1{-}\fracs{1}{2} r^{K-1}, 1]$, and the second term comes from the other intervals.
Note that to achieve convergence to zero requires that both $r\rightarrow 1$
and $r^K\rightarrow 0$.

Figure \ref{fig:fig1}(b) illustrates the approximation for 
$r\!=\!\fracs{1}{2} , K\!=\!2$, 
and also has plots of $\EE[(\tZ{-}Z)^2]$ and $\EE[(\tZ{-}Z)^4]$ 
as a function of $2K$,  the number of intervals, with fixed
$r\!=\!\fracs{1}{2}$.  Note that for both $\EE[(\tZ{-}Z)^2]$
and $\EE[(\tZ{-}Z)^4]$ the $(1{-}r)^{2p}$ error term eventually
dominates once $r^K$ is sufficiently small.

For $K\!=\!16$ the MSE is
$\EE[(\tZ{-}Z)^2]\!\approx\! 4\!\times\! 10^{-5}$.
This size is significant because in single precision it is possible to 
perform the table lookup within a single 512-bit AVX vector register on 
current generation Intel Xeon CPUs, giving a very efficient vector 
implementation.

\subsection{Polynomial approximation}

The final method is a simple polynomial approximation
\[
\tQ(U) \equiv \sum_{k=1}^K a_k\, (U{-}\fracs{1}{2})^{2k-1},
\]
with the coefficients $a_k$ determined by a least squares best fit 
to $\Phi^{-1}(U)$.  This method is particularly efficient on GPUs 
as it avoids the need for a table lookup, however it is also the 
least accurate of the three approximations for realistic polynomial sizes.

Figure \ref{fig:fig1}(c) illustrates the approximation for $K\!=\!2$, 
and also has plots of $\EE[(\tZ{-}Z)^2]$ and $\EE[(\tZ{-}Z)^4]$ 
as a function of $2K{-}1$, the degree of the polynomial.

For $K\!=\!4$ the MSE is $\EE[(\tZ{-}Z)^2]\!\approx\! 2.6 \!\times\!10^{-3}$;
this seems a good balance between computational cost and accuracy,
and will be used in the numerical experiments later.

\section{MLMC algorithms}

In this section we begin with a quick recap of the multilevel Monte Carlo
method, and then discuss how a nested version of MLMC can be used with
approximate distributions. The third part then applies the ideas to the
simulation of SDE solutions, using approximate Normal random variables.

\subsection{Standard MLMC}

\label{sec:standard_MLMC}

If $P$ is a random variable which is a function of a set of random inputs
$\omega$, then the Monte Carlo estimate for the expected value $\EE[P]$ is
the simple average
\[
N^{-1} \sum_{n=1}^N P(\omega^{(n)})
\]
where the $\omega^{(n)}, n\!=\!1, 2, 3, \ldots, N$ are i.i.d.~samples of $\omega$.
To achieve a root-mean-square (RMS) error of $\eps$ requires
$\displaystyle
N \!\approx\! \eps^{-2} V
$
samples, where $V \!=\! \VV[P]$ is the variance.  If each sample costs $C$
then the total cost is approximately $\eps^{-2} V \, C$.

Suppose now that $\tP \approx P$, then since
$\displaystyle \EE[P] = \EE[\tP] + \EE[P{-} \tP]$
we can instead use the estimator
\[
N_0^{-1} \sum_{n=1}^{N_0} \tP(\omega^{(0,n)}) + 
N_1^{-1} \sum_{n=1}^{N_1} (P(\omega^{(1,n)}) - \tP(\omega^{(1,n)})).
\]
The cost of this estimator is $N_0 C_0 + N_1 C_1$, and 
the variance is $V_0 / N_0 + V_1 / N_1$, where
$V_0\equiv \VV[\tP], ~ V_1\equiv \VV[P{-}\tP]$, if all of 
the $\omega^{(\ell,n)}$ are independent.
Minimising the cost subject to the same accuracy requirement 
gives the total cost
$\displaystyle
\eps^{-2} (\sqrt{V_0C_0} + \sqrt{V_1C_1})^2.
$
which is significantly less than $\eps^{-2} V \, C$ if $C_0\ll C$ and $V_1\ll V$.

To give a quantitative example, suppose that
$C_0 \!=\! 10^{-1} C,\ C_1 \!=\! C$, so the cost of evaluating $\tP$ is
10 times less than evaluating $P$, and
$V_0 \!=\! V,\ V_1 \!=\! 10^{-3} V$.
In that case, the total cost is approximately $0.12\, \eps^{-2} V\, C$,
a factor of 8 savings compared to the original Monte Carlo calculation.

This two-level calculation is easily generalised to a multilevel treatment.
Given a sequence of increasingly accurate (and costly) approximations 
$\hP_0, \hP_1, \hP_2, \ldots \longrightarrow P$, for example from the
approximation of an SDE using $2^\ell$ timesteps on level $\ell$, then
\[
\EE[ \hP_L ] = \EE[ \hP_0 ] + \sum_{\ell=1}^L \EE[ \hP_\ell {-} \hP_{\ell-1} ],
\]
and so the MLMC estimate for $\EE[\hP_L]$ is
\[
\hY \equiv N_0^{-1}  \sum_{n=1}^{N_0} \hP_0^{(n)} + 
\sum_{\ell=1}^L N_\ell^{-1}  \sum_{n=1}^{N_\ell} (\hP_\ell^{(\ell,n)} - \hP_{\ell-1}^{(\ell,n)}).
\]
The expected value of the estimator is $\EE[\hP_L]$, and the MSE
can be decomposed into the sum of the variance and the square of the bias
to give
\[
  {\rm MSE}\ =\  
  \VV[\hY] + \left( \EE[\hP_L - P] \right)^2
  = \sum_{\ell=0}^L N^{-1}_\ell V_\ell + \left( \EE[\hP_L - P] \right)^2,
\]
where $V_0\equiv\VV[\hP_0]$, and for $\ell\!\geq\!1$, 
$V_\ell \equiv \VV[\hP_\ell {-}\hP_{\ell-1}]$.

If level $L$ is chosen so that $|\EE[\hP_L - P]| \!<\! \eps/\sqrt{2}$, then
an overall RMS error of $\eps$ can be achieved at a total cost of approximately
\[
  2\, \eps^{-2} \left( \sum_{\ell=0}^L \sqrt{V_\ell C_\ell} \right)^2,
\]
where $C_\ell$ is the cost of a single sample of $\hP_\ell {-}\hP_{\ell-1}$.  
If the product $V_\ell C_\ell$ decreases exponentially with level then the overall 
cost is $O(\eps^{-2})$, corresponding to an $O(1)$ cost per sample on average, 
much less than for the standard Monte Carlo method.
For further details see \cite{giles08,giles15}.

\subsection{Nested MLMC for approximate distributions}

\label{sec:nested_MLMC}

When using random variables from approximate distributions, the first possibility is
to use the two-level treatment discussed before, with
\[
\EE[ P ]\ =\ \EE[ \tP ]\ +\ \EE[ P {-} \tP ].
\]
In this case, each sample $P {-} \tP$ would use the same underlying stochastic
sample for both $P$ and $\tP$, for example using the same input uniform random
variable $U$, and then applying the inverse of either the true CDF or an approximate CDF to
produce the random variables required to compute $P$ and $\tP$, respectively.
As stated before, this can give considerable savings if the cost of computing $\tP$
is much less than the cost of computing $P$, and $\VV[P{-}\tP] \ll \VV[P]$.

However, what can we do if our starting point is an MLMC expansion in some 
other quantity, such as the timestep?  In that case we can use nested MLMC,
an idea discussed by Giles, Kuo and Sloan \cite{gks18} which is a generalisation of 
Multi-Index Monte Carlo by Haji-Ali, Nobile and Tempone \cite{hnt16}.
We start from the usual MLMC expansion and then split each of the required
expectations into two pieces, one using the approximate random variables and the
other computing the required correction,
\begin{eqnarray*}
\EE[ \hP_L ] &=& \EE[ \hP_0 ] + \sum_{\ell=1}^L \EE[ \hP_\ell {-} \hP_{\ell-1} ]\\
           &=& \EE[ \tP_0 ] + \EE[ \hP_0 {-}\tP_0 ] \\
& & +\ \sum_{\ell=1}^L \left\{\rule{0in}{0.22in}  \EE[ \tP_\ell {-} \tP_{\ell-1} ] + 
\EE\left[ (\hP_\ell {-} \hP_{\ell-1}) - (\tP_\ell {-} \tP_{\ell-1}) \right] \right\}
\end{eqnarray*}

The pair $(\tP_\ell, \tP_{\ell-1})$ are generated in the same way as
$(P_\ell, P_{\ell-1})$, based on the same underlying uniform random variables $U$,
but converting them differently into the random variables required for the
calculation of $\hP$ and $\tP$.

As explained in the previous section, the standard
MLMC cost to achieve an RMS accuracy of $\eps$ is approximately
\[
C_{\rm MLMC} = 2\, \eps^{-2} \left( \sum_{\ell=0}^L \sqrt{C_\ell V_\ell} \right)^2,
\]
where $C_\ell$ is the cost of a single sample of $\hP_\ell{-}\hP_{\ell-1}$
on level $\ell$, and $V_\ell$ is its variance.
The complexity analysis extends naturally to the nested
{\em approximate} MLMC (AMLMC)
algorithm described above, giving a cost of approximately
\[
C_{\rm AMLMC} = 2\, \eps^{-2} \left( \sum_{\ell=0}^L \sqrt{\tC_\ell V_\ell}
    + \sqrt{(C_\ell{+}\tC_\ell) \tV_\ell} \right)^2,
\]
where $\tC_\ell$ is the cost of one sample of $\tP_\ell{-}\tP_{\ell-1}$,
which has a variance approximately equal to $V_\ell$, and
$C_\ell{+}\tC_\ell$ is the cost of one sample of 
$(\hP_\ell{-}\hP_{\ell-1}) - (\tP_\ell{-}\tP_{\ell-1})$, and $\tV_\ell$ is
its variance.

Note that 
\begin{eqnarray*}
  C_{\rm AMLMC} &=& 2\, \eps^{-2} \left( \sum_{\ell=0}^L \sqrt{C_\ell V_\ell}
  \sqrt{\frac{\tC_\ell}{C_\ell}} \left( 1 
  + \sqrt{\left(\frac{C_\ell}{\tC_\ell}{+}1\right) \frac{\tV_\ell}{V_\ell}}\ \right)
   \right)^2\\
  &\leq & C_{\rm MLMC} \max_{0\leq\ell\leq L} \frac{\tC_\ell}{C_\ell}
  \left( 1+ \sqrt{\left(\frac{C_\ell}{\tC_\ell}{+}1\right) \frac{\tV_\ell}{V_\ell}}\ \right)^2
\end{eqnarray*}
so that if $\displaystyle \tV_\ell/V_\ell \ll \tC_\ell/C_\ell \ll 1$
then the cost is reduced by a factor of approximately
$\max_\ell  \tC_\ell/C_\ell$.

\subsection{Application to SDE simulation}

To make the ideas in the preceding section more concrete, we consider
an application involving the solution of a scalar autonomous SDE,
\[
\D X_t = a(X_t)\, \D t + b(X_t)\, \D W_t,
\]
on the time interval $[0,T]$ subject to fixed initial data $X_0$.
Furthermore, we suppose that we are interested in the expected
value of a function of the final path value $\EE[f(X_T)]$.

If $\hXn$ is an approximation to $X_{nh}$ using a uniform timestep of size $h$,
then the simplest numerical approximation is the Euler-Maruyama discretisation,
\[
\hXnp = \hXn + a(\hXn)\, h + b(\hXn)\, \Delta W_n,
\]
in which the Brownian increment $\Delta W_n$ is a Normal random
variable with mean 0 and variance $h$, so it can be simulated as
$
\Delta W_n \equiv \sqrt{h}\, Z_n
$
where $Z_n$ is a unit Normal random variable.  In turn $Z_n$ can 
be generated from a uniform $(0,1)$ random variable $U_n$ through 
$Z_n = \Phi^{-1}(U_n)$.
Using this discretisation the output quantity of interest would
be $\hP \equiv f(\hX_N)$ where $N = T/h$ is assumed to be an integer.

The simplest way in which we could use approximate Normal random
variables would be to keep to a fixed number of timesteps, and
generate an approximate output quantity $\tP$ by replacing the
Normals $Z_n$ by approximate Normals $\tZ_n \equiv \tQ(U_n)$
generated using the same $U_n$.

However, we would like to combine the benefits of MLMC for the time
discretisation with the reduced execution cost of approximate
Normals and so can instead use the nested MLMC approach.
To do this, the key question is how do we compute each sample of
$(\hP_\ell {-} \hP_{\ell-1}) - (\tP_\ell {-} \tP_{\ell-1})$?

Let $h\equiv h_\ell$ be the timestep for a fine path on level $\ell$,
with $\hXfn$ representing the fine path approximation to the SDE
solution $X(nh)$ which is computed using the discrete equations
\begin{equation}
\hXfnp = \hXfn + a(\hXfn)\, h + b(\hXfn)\, \sqrt{h}\, Z_n,
\label{eq:hf}
\end{equation}
based on the true Normals, $Z_n$.  The corresponding coarse path 
approximation $\hXcn$ using timesteps of $2h$ and combined
Brownian increments $\Delta W_n {+} \Delta W_{n+1}$ is given by 
\begin{eqnarray*}
\hXcpp
   &=& \hXcn + 2\, a(\hXcn)\, h + b(\hXcn)\, (\Delta W_n + \Delta W_{n+1})
\\ &=& \hXcn + 2\, a(\hXcn)\, h + b(\hXcn)\, (\sqrt{h}\, Z_n + \sqrt{h}\, Z_{n+1}),
\end{eqnarray*}
for even integers $n$.  Alternatively, it is more convenient 
to write it equivalently as
\begin{equation}
\hXcnp = \hXcn + a(\hXcl)\, h + b(\hXcl)\, \sqrt{h}\, Z_n,
\label{eq:hc}
\end{equation}
where $\underline{n} \equiv 2 \lfloor n/2 \rfloor$ is $n$ rounded 
down to the nearest even number. This gives the same values 
for $\hXcn$ at the even timesteps.

The corresponding discrete equations for the fine and coarse paths 
computed using the approximate Normal random variables are
\begin{equation}
\tXfnp = \tXfn + a(\tXfn)\, h + b(\tXfn)\, \sqrt{h}\, \tZ_n,
\label{eq:tf}
\end{equation}
and
\begin{equation}
\tXcnp = \tXcn + a(\tXcl)\, h + b(\tXcl)\, \sqrt{h}\, \tZ_n,
\label{eq:tc}
\end{equation}
and then finally we obtain
\[
(\hP_\ell {-} \hP_{\ell-1}) - (\tP_\ell {-} \tP_{\ell-1})
  \ =\ 
(f(\hXf_N) {-} f(\hXc_N)) -  (f(\tXf_N) {-} f(\tXc_N)).
\]

This description is for the case in which the timestep $h_\ell$ is halved
on each successive level.  There is a natural extension to other geometric
sequences such as $h_\ell = 4^{-\ell} h_0$, with the values of the coarse path 
drift and volatility being updated at the end of each coarse timestep, while
the fine path values are updated after each fine path timestep.

\if 0

\begin{figure}
    
{\setlength{\unitlength}{1.2cm}
\begin{center}
\begin{picture}(11,5)
\put(0,0){\vector(1,0){5}}
\put(0,0){\vector(0,1){5}}
\put(2.5,5.5){\makebox(0,0){MIMC: 4-way cancellation}}
\put(4.5,-0.5){\makebox(0,0){$\ell_1$}}
\put(-0.5,4.5){\makebox(0,0){$\ell_2$}}

\multiput(0,0)(0,1){5}{\line(1,0){4}}
\multiput(0,0)(1,0){5}{\line(0,1){4}}
\multiput(0,0)(0,1){5}{
\multiput(0,0)(1,0){5}{
\put(-.05,-.05){\makebox(0,0)[rt]{$+$}}
}
}
\multiput(0,0)(0,1){4}{
\multiput(0,0)(1,0){5}{
\put(-.05,0.05){\makebox(0,0)[rb]{$-$}}
}
}
\multiput(0,0)(0,1){5}{
\multiput(0,0)(1,0){4}{
\put(0.01,-.05){\makebox(0,0)[lt]{$-$}}
}
}
\multiput(0,0)(0,1){4}{
\multiput(0,0)(1,0){4}{
\put(0.05,0.05){\makebox(0,0)[lb]{$+$}}
}
}
{\color{red}
\put(2,2){\circle{0.8}}
\put(1.0,3.8){\oval(0.8,0.4)}
\put(3.8,1.0){\oval(0.4,0.8)}
}

\put(6,0){\vector(1,0){5}}
\put(6,0){\vector(0,1){5}}
\put(8.5,5.5){\makebox(0,0){nested MLMC: 2-way cancellation}}
\put(10.5,-0.5){\makebox(0,0){$\ell_1$}}
\put(5.5,4.5){\makebox(0,0){$\ell_2$}}

\multiput(6,0)(0,1){5}{
\multiput(0,0)(1,0){4}{
\line(1,0){0.9}}}
\multiput(6.0,0)(1,0){5}{\line(0,1){4}}
\multiput(6.1,0)(1,0){4}{\line(0,1){4}}
\multiput(6,0)(0,1){5}{
\multiput(0,0)(1,0){5}{
\put(-.05,-.05){\makebox(0,0)[rt]{$+$}}
}
}
\multiput(6,0)(0,1){4}{
\multiput(0,0)(1,0){5}{
\put(-.05,0.05){\makebox(0,0)[rb]{$-$}}
}
}
\multiput(6,0)(0,1){5}{
\multiput(0,0)(1,0){4}{
\put(0.15,-.05){\makebox(0,0)[lt]{$-$}}
}
}
\multiput(6,0)(0,1){4}{
\multiput(0,0)(1,0){4}{
\put(0.15,0.05){\makebox(0,0)[lb]{$+$}}
}
}

{\color{red}
\multiput(6.8,0.0)(0,1){4}{\oval(0.4,0.8)}
\multiput(8.3,0.0)(0,1){4}{\oval(0.4,0.8)}
\multiput(6.05,3.8)(1,0){4}{\oval(0.9,0.4)}
}
\end{picture}
\end{center}}

\caption{An illustration of the difference between MIMC (Multi-Index Monte Carlo) and
         nested MLMC.}
\end{figure}
       
A key aspect of the standard MLMC for SDEs is that the Brownian
increment for a timestep of $2h$ is equal to the sum of Brownian 
increments for two timesteps of size $h$.

Another way of putting this is that if $Z_1, Z_2$ are unit Normal 
r.v.'s, then
\[
\sqrt{h}\ Z_1 + \sqrt{h}\ Z_2 = \sqrt{2h}\ Z_3
\]
where $Z_3$ is also a unit Normal r.v.

However, if $\tZ_1, \tZ_2$ are approximate unit Normal random variables, 
and 
\[
\sqrt{h}\ \tZ_1 + \sqrt{h}\ \tZ_2 = \sqrt{2h}\ \tZ_3
\]
then $\tZ_3$ is an approximate Normal r.v.~from a different distribution.

This is why we use the nested MLMC approach and not MIMC.

\fi

\section{Numerical analysis}

We begin by making two sets of assumptions which will be assumed to hold
throughout the analysis. 

The first concerns the drift and volatility functions, and assumes a 
greater degree of smoothness than the usual assumptions used to prove 
half-order strong convergence for the Euler-Maruyama discretisation
\cite{kp92}.
\begin{assumption}
\label{assumption:SDE}
The drift function $a: \RR \rightarrow \RR$ and 
volatility function $b: \RR \rightarrow \RR$ are both $C^1(\RR)$, and 
both they and their derivatives are Lipschitz continuous so that there 
exist constants $L_a$, $L_b$, $L'_a$, $L'_b$ such that
\[
\begin{array}{rrr}
|a(x){-}a(y)| \leq L_a |x{-}y|, &&
\ \, |b(x){-}b(y)| \leq L_b |x{-}y|, \\[0.05in]
|a'(x){-}a'(y)| \leq L'_a |x{-}y|, &&
|b'(x){-}b'(y)| \leq L'_b |x{-}y|.
\end{array}
\]
\end{assumption}

The second concerns approximate Normal random variables $\tZ$, 
and their relationship to corresponding exact Normal random 
variables $Z$.

\begin{assumption}
\label{assumption:Z}
Random variable pairs $(Z, \tZ)$ can be generated such that
$Z \sim N(0,1)$, $\EE[\tZ]\!=\!0$, and 
$\EE[\, |\tZ{-}Z|^p ] \leq \EE[\, |Z|^p]$ for all $p\!\geq\! 2$.
\end{assumption}
In most cases, the pairs will be generated as $(\Phi^{-1}(U), \tQ(U))$,
for a common uniform random variable $U$, but we leave open the 
possibility that they may be generated in an alternative way; 
this is needed later for Lemma \ref{lemma:first_diff_b}.
Note also that $|\tZ| \!\leq\! |Z| {+} |\tZ{-}Z|$, so a consequence 
of this assumption is that $\EE[\,|\tZ|^p] \leq 2^p\, \EE[\,|Z|^p]$;
this is also used later.

The proofs of the main results are greatly simplified by first proving 
the following lemma using the discrete Burkholder-Davis-Gundy 
inequality \cite{bdg72}.

\begin{lemma}
\label{lemma:key}
Suppose that for a time interval $[0,T]$ with $T {=} Nh$ and $t_n {=} nh$, 
we have the discrete equations
\[
D_{n+1} = D_n + A_n D_n h + B_n D_n h^{1/2} \tZ_n + \Theta_n h + \Psi_n h^{1/2},
\]
subject to initial data $D_0\!=\!0$, where $(Z_n,\tZ_n), n{\geq}0$
are i.i.d.~random variable pairs satisfying Assumption \ref{assumption:Z},
and using the standard filtration ${\cal F}_t$, 
$A_n, B_n, \Theta_n$ are ${\cal F}_{t_n}$-adapted, depending only
on $X_0$ and $(Z_m,\tZ_m), m{<}n$, whereas 
$\Psi_n$ is ${\cal F}_{t_{n+1}}$-adapted
with $\EE[\Psi_n | {\cal F}_{t_n}] \!=\! 0$.

Furthermore, suppose that $|A_n| \!\leq\! L_a$, $|B_n| \!\leq\! L_b$, 
where $L_a, L_b$ are as defined in Assumption \ref{assumption:SDE}, 
and for some $p\!\geq\! 2$ there are constants $c_1, c_2$, which do not 
depend on $h$, such that
\[
\max_{0\leq n < N} \EE[\, |\Psi_n|^p ] \leq c_1,~~~
\max_{0\leq n < N} \EE[\, |\Theta_n|^p ] \leq c_2.
\]
Then, there is a constant $c_3$ depending only on 
$L_a, L_b, p, T$ such that
\[
\EE \left[ \max_{0\leq n \leq N} |D_n|^p \right] \leq c_3 (c_1 {+} c_2).
\]
\end{lemma}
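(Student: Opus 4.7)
The plan is to decompose $D_n$ into a drift part and a martingale part, bound each piece separately, and close the resulting recursion with discrete Gronwall. Summing the recursion, I would write
\[
D_n\ =\ \underbrace{\sum_{m=0}^{n-1} A_m D_m\, h}_{S_n}
\ +\ \underbrace{\sum_{m=0}^{n-1} \Theta_m\, h}_{T_n}
\ +\ \underbrace{\sum_{m=0}^{n-1} B_m D_m\, h^{1/2} \tZ_m}_{M_n}
\ +\ \underbrace{\sum_{m=0}^{n-1} \Psi_m\, h^{1/2}}_{R_n},
\]
where $M_n$ is a martingale because $\tZ_m$ has mean zero and is independent of $\mathcal{F}_{t_m}$ while $B_m D_m$ is $\mathcal{F}_{t_m}$-adapted, and $R_n$ is a martingale because $\EE[\Psi_m | \mathcal{F}_{t_m}]=0$. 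Applying $|\sum_{i=1}^4 x_i|^p \leq 4^{p-1}\sum |x_i|^p$ reduces the problem to bounding $\EE[\max_n |\cdot|^p]$ for each of the four pieces.

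For the drift terms $S_n$ and $T_n$ I would use the H\"older/Jensen estimate $|\sum_{m=0}^{n-1} x_m h|^p \leq T^{p-1} h \sum_m |x_m|^p$, which together with the bounds $|A_m| \leq L_a$ and $\EE[|\Theta_m|^p] \leq c_2$ yields
\[
\EE[\max_n |S_n|^p] \leq T^{p-1} L_a^p\, h \sum_{m=0}^{N-1} \EE[|D_m|^p], \qquad
\EE[\max_n |T_n|^p] \leq T^p c_2.
\]
For the martingale terms I would invoke the discrete Burkholder--Davis--Gundy inequality to pass to the quadratic variation, giving bounds involving $\EE[(\sum_m B_m^2 D_m^2 h \tZ_m^2)^{p/2}]$ and $\EE[(\sum_m \Psi_m^2 h)^{p/2}]$. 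Applying the same Jensen inequality once more, using $|B_m|\leq L_b$, and exploiting that $D_m$ is $\mathcal{F}_{t_m}$-measurable while $\tZ_m$ is independent of $\mathcal{F}_{t_m}$, I can factor $\EE[|D_m|^p |\tZ_m|^p] = \EE[|D_m|^p]\,\EE[|\tZ_m|^p]$; Assumption \ref{assumption:Z} then gives $\EE[|\tZ_m|^p]\leq 2^p\EE[|Z|^p]$, which is a finite constant depending only on $p$. This produces
\[
\EE[\max_n |M_n|^p] \leq K\, h \sum_{m=0}^{N-1} \EE[|D_m|^p], \qquad
\EE[\max_n |R_n|^p] \leq K\, T^{p/2} c_1,
\]
for a constant $K$ depending only on $L_b,p,T$.

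Combining these gives an inequality of the form $\phi_N \leq K_1 h \sum_{m=0}^{N-1} \phi_m + K_2(c_1+c_2)$, where $\phi_n \equiv \EE[\max_{k\leq n}|D_k|^p]$ and the constants depend only on $L_a, L_b, p, T$. The same bound holds with $N$ replaced by any $n\leq N$, so discrete Gronwall closes out with $\phi_N \leq K_2(c_1+c_2)\exp(K_1 T)$, proving the claim with $c_3 = K_2\exp(K_1 T)$. The main obstacle I anticipate is purely bookkeeping: juggling the Jensen/H\"older exponents inside the BDG bracket so that the dependence on $D_m$ appears in the form $\EE[|D_m|^p]$ (not some higher moment), and verifying that the $\tZ_m$-dependent expectation can be split off by independence and controlled uniformly by Assumption \ref{assumption:Z}.
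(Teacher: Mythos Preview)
Your proposal is correct and follows essentially the same approach as the paper: decompose $D_n$ into four sums, apply Jensen to the drift pieces and the discrete Burkholder--Davis--Gundy inequality to the martingale pieces, factor $\EE[|D_m|^p|\tZ_m|^p]$ by independence and control $\EE[|\tZ_m|^p]$ via Assumption~\ref{assumption:Z}, then close with discrete Gr\"onwall. The only cosmetic difference is that the paper works directly with $E_n=\EE[\max_{n'\leq n}|D_{n'}|^p]$ throughout rather than first bounding in terms of $\EE[|D_m|^p]$ and then upgrading to the running maximum, but this is equivalent.
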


\noindent
{\bf Note}: the lemma includes as a special case the case in which
$\tZ_n \!=\! Z_n$, i.e.~with Normal random variables rather than approximate 
Normal random variables.

\begin{proof}
Summing over the first $n$ timesteps gives
\[
D_n = \sum_{m=0}^{n-1} 
\left\{  A_m D_m h + B_m D_m h^{1/2} \tZ_m + \Theta_m h + \Psi_m h^{1/2}\right\},
\]
and therefore if we define 
$\displaystyle E_n \equiv \EE\left[\max_{0< n'\leq n}|D_{n'}|^p \right]$ we obtain,
through Jensen's inequality,
\begin{eqnarray*}
E_n &\leq& 
~ 4^{p-1}\, \EE\left[ \,\max_{n'\leq n} \left| \sum_{m=0}^{n'-1}  A_m D_m h \right|^p\right]
+ 4^{p-1}\, \EE\left[ \,\max_{n'\leq n} \left| \sum_{m=0}^{n'-1}  B_m D_m h^{1/2} \tZ_m \right|^p\right]
\\ && \!\!\!\!
+\ 4^{p-1}\, \EE\left[ \,\max_{n'\leq n}  \left| \sum_{m=0}^{n'-1}  \Theta_m h \right|^p\right]
+ 4^{p-1}\, \EE\left[ \,\max_{n'\leq n} \left| \sum_{m=0}^{n'-1}  \Psi_m h^{1/2} \right|^p\right].
\end{eqnarray*}
For the first term, using Jensen's inequality again gives
\[
\EE\left[ \, \max_{n'\leq n} \left| \sum_{m=0}^{n'-1}  A_m D_m h \right|^p\ \right]
\ \leq\ h^p n^{p-1} \sum_{m=0}^{n-1} \EE\left[ \,| A_m D_m |^p \right]
\ \leq\  h\, T^{p-1} L_a^p \sum_{m=0}^{n-1} E_m.
\]
For the second term the discrete time Burkholder-Davis-Gundy 
inequality \cite{bdg72}, together with Jensen's inequality and
Assumption \ref{assumption:Z} gives the bound
\begin{eqnarray*}
\EE\left[ \, \max_{n'\leq n} \left| \sum_{m=0}^{n'-1}  B_m D_m h^{1/2} \tZ_m \right|^p\ \right]
&\leq& C_p\ \EE\left[ \,\left| \sum_{m=0}^{n-1} B^2_m D^2_m h \tZ_m^2 \right|^{p/2}\, \right]
\\
&\leq& C_p\ h^{p/2} n^{p/2-1} \sum_{m=0}^{n-1} \EE\left[ \,| B_m D_m|^p |\tZ_m|^p  \right]
\\
&\leq& C_p\, h\, T^{p/2-1} L_b^p\, 2^p\, \EE[\,|Z|^p] \sum_{m=0}^{n-1} E_m,
\end{eqnarray*}
with the constant $C_p$ depending only on $p$.
Similarly, the third term has the bound
\[
\EE\left[ \, \max_{n'\leq n} \left| \sum_{m=0}^{n'-1}  \Theta_m h \right|^p\ \right]
\ \leq\  T^p c_2,
\]
and the fourth term has the bound
\[
\EE\left[ \, \max_{n'\leq n} \left| \sum_{m=0}^{n'-1}  \Psi_m h^{1/2} \right|^p\ \right]
\ \leq\ C_p\,  T^{p/2}  \, c_1.
\]
Combining these four bounds we obtain
\begin{eqnarray*}
E_n &\leq& 4^{p-1} \left(T^{p-1} L_a^p  + C_p T^{p/2-1} L_b^p\, 2^p\, \EE[\,|Z|^p] \right)
\, h\, \sum_{m=0}^{n-1} E_m
\\ && +\ 4^{p-1} \left( T^p c_2 + C_p\, T^{p/2}\, c_1\right),
\end{eqnarray*}
and therefore by Gr\"onwall's inequality we obtain
\[
E_n \leq  4^{p-1} ( T^p\, c_2 + C_p\,  T^{p/2}\, c_1)\,
\exp\!\left(\rule{0in}{0.14in}4^{p-1} (T^p L_a^p  + C_p\, 2^p\, \EE[\,|Z|^p] \, T^{p/2} L_b^p)\right).
\]
Setting
$\displaystyle
c_3 = 4^{p-1} \max\left( T^p, C_p\, T^{p/2}\right) \ 
\exp\left(\rule{0in}{0.16in}4^{p-1} (T^p L_a^p  + C_p\, 2^p\, \EE[\,|Z|^p] \, T^{p/2} L_b^p)\right)
$
completes the proof.
\end{proof}


We are now ready to prove the first result, which is a generalisation 
of Lemma 2 in \cite{ghmr19}.

\begin{lemma}
\label{lemma:first_diff}
For a fixed time interval $T\!=\!N h$, for any $p\!\geq\!2$ there exists a 
constant $c$ which depends on $X_0, a, b, T, p$ but not on $h$ or 
$\EE[\, |\tZ{-}Z|^p]$, such that for any $0\!<\!n\!\leq\!N$
\[
\EE\left[ \max_{0\leq n\leq N}|\tXn{-}\hXn|^p\right] 
\leq c\ \EE[\, |\tZ{-}Z|^p]
\]
\end{lemma}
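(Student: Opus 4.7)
The plan is to put the difference $D_n := \tXn - \hXn$ into exactly the form required by Lemma~\ref{lemma:key} with $\Theta_n = 0$, and then read off the conclusion. Since $D_0 = 0$, subtracting \eqref{eq:hf} from \eqref{eq:tf} gives
\[
D_{n+1} = D_n + \bigl(a(\tXn){-}a(\hXn)\bigr) h + \bigl(b(\tXn)\tZ_n - b(\hXn) Z_n\bigr) h^{1/2}.
\]
The first step is the standard ``Lipschitz slope'' trick: define
\[
A_n = \frac{a(\tXn){-}a(\hXn)}{D_n},\qquad
B_n = \frac{b(\tXn){-}b(\hXn)}{D_n}
\]
(set to $0$ when $D_n\!=\!0$), so that $|A_n|\!\leq\! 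L_a$, $|B_n|\!\leq\! L_b$ by Assumption~\ref{assumption:SDE}. Writing $b(\tXn)\tZ_n - b(\hXn) Z_n = B_n D_n \tZ_n + b(\hXn)(\tZ_n{-}Z_n)$ recasts the recursion in the form of Lemma~\ref{lemma:key} with $\Theta_n = 0$ and $\Psi_n = b(\hXn)(\tZ_n{-}Z_n)$.

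Next I would verify the measurability and zero-mean hypotheses. Clearly $A_n, B_n$ are $\mathcal{F}_{t_n}$-measurable (they depend only on $X_0$ and past $(Z_m,\tZ_m)$), and $\Psi_n$ is $\mathcal{F}_{t_{n+1}}$-measurable. Since $(Z_n,\tZ_n)$ is independent of $\mathcal{F}_{t_n}$ and $\EE[\tZ_n]=\EE[Z_n]=0$ by Assumption~\ref{assumption:Z},
\[
\EE[\Psi_n\mid \mathcal{F}_{t_n}] = b(\hXn)\,\EE[\tZ_n{-}Z_n] = 0.
\]

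The one quantitative input still needed is a bound on $\EE[|\Psi_n|^p]$. By independence,
\[
\EE[|\Psi_n|^p] = \EE[|b(\hXn)|^p]\,\EE[|\tZ{-}Z|^p],
\]
and $|b(\hXn)|\leq |b(0)| + L_b|\hXn|$ by the Lipschitz assumption, so it suffices to show $\sup_n \EE[|\hXn|^p]$ is bounded independently of $h$. This is the standard moment bound for Euler--Maruyama, and is itself obtained from Lemma~\ref{lemma:key} applied to $\hXn - X_0$ in the special case $\tZ_n = Z_n$ noted after the lemma (with $\Theta_n = a(X_0)$ constant and $\Psi_n = b(X_0) Z_n$, both having $p$-th moments bounded uniformly). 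This gives $c_1 \leq C\,\EE[|\tZ{-}Z|^p]$ for a constant $C$ depending only on $X_0, a, b, T, p$.

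Finally, applying Lemma~\ref{lemma:key} with $c_2=0$ yields
\[
\EE\!\left[\max_{0\leq n\leq N} |D_n|^p\right] \leq c_3 c_1 \leq c\,\EE[|\tZ{-}Z|^p],
\]
which is the claim. The only step requiring any real care is the algebraic rearrangement of $b(\tXn)\tZ_n - b(\hXn)Z_n$ so that the ``error driver'' $\tZ_n{-}Z_n$ is paired with an $\mathcal{F}_{t_n}$-measurable coefficient $b(\hXn)$, allowing $\Psi_n$ to have conditional mean zero; everything else is a direct invocation of Lemma~\ref{lemma:key}.
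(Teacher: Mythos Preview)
Your proof is correct and follows essentially the same route as the paper: decompose $D_{n+1}$ so that the $\tZ_n{-}Z_n$ term is paired with the $\mathcal F_{t_n}$-measurable coefficient $b(\hXn)$, then invoke Lemma~\ref{lemma:key} with $\Theta_n=0$ and $\Psi_n=b(\hXn)(\tZ_n{-}Z_n)$. The only cosmetic differences are that the paper writes $A_n=a'(\xi_{1,n})$, $B_n=b'(\xi_{2,n})$ via the Mean Value Theorem (Lemma~\ref{lemma:MVT}) rather than your difference-quotient definition, and it simply cites the standard Euler--Maruyama moment bound for $\EE[|b(\hXn)|^p]$ rather than re-deriving it from Lemma~\ref{lemma:key}.
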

\begin{proof}
Taking the difference between 
\begin{equation}
  \tXnp = \tXn + a(\tXn)\, h + b(\tXn)\, h^{1/2} \tZ_n,
  \label{eq:original}
\end{equation}
and
\begin{equation}
  \hXnp = \hXn + a(\hXn)\, h + b(\hXn)\, h^{1/2} Z_n,
  \label{eq:approx}
\end{equation}
and defining $D_n \equiv \tXn {-} \hXn$, we obtain
\begin{equation}
D_{n+1} = D_n + a'(\xi_{1,n})\, D_n h + b'(\xi_{2,n}) D_n h^{1/2} \tZ_n
                                 + b(\hXn) \, h^{1/2} (\tZ_n{-}Z_n)
\label{eq:diff}
\end{equation}
for suitably defined $\xi_{1,n}$, $\xi_{2,n}$ arising from the Mean Value Theorem,
Lemma \ref{lemma:MVT}.

This is in the correct form to apply Lemma \ref{lemma:key} since
$|a'(\xi_{1,n})| \leq L_a$, ~
$|b'(\xi_{2,n})| \leq L_b$,
\[
\EE\left[b(\hXn) \, (\tZ_n{-}Z_n)\ |\ \hXn\right] \, =\, b(\hXn) \ \EE[\tZ_n{-}Z_n]\, =\, 0,
\]
and
\[
\EE\left[ |b(\hXn) \, (\tZ_n{-}Z_n)|^p \right]
\, =\, \EE[\, |b(\hXn)|^p] \ \EE[\,|\tZ_n{-}Z_n)|^p ].
\]
The result then follows from Lemma \ref{lemma:key} after noting that
the standard analysis of the Euler-Maruyama method (e.g.\ see \cite{kp92})
proves that $\EE[\, |\hXn|^p]$, $\EE[\, |a(\hXn)|^p]$ and $\EE[\, |b(\hXn)|^p]$ 
are all uniformly bounded on the time interval $[0,T]$.
\end{proof}

\begin{corollary}
\label{corollary:bounds}
For a fixed time interval $T\!=\!N h$, for any $p\!\geq\!2$ there exists a 
constant $c$ which depends on $X_0, a, b, T, p$ but not on $h$, 
such that for any $0\!<\!n\!\leq\!N$
\[
\EE[\, |\tXn|^p] \leq c, ~~~
\EE[\, |a(\tXn)|^p] \leq c, ~~~
\EE[\, |b(\tXn)|^p] \leq c.
\]
\end{corollary}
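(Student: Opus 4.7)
The plan is to bootstrap directly from Lemma \ref{lemma:first_diff} together with the well-known moment bounds for the Euler-Maruyama scheme, so there is essentially nothing deep to do here.

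First, I would write $\tXn = \hXn + (\tXn - \hXn)$ and apply the elementary inequality $|x+y|^p \leq 2^{p-1}(|x|^p + |y|^p)$ to obtain
\[
\EE[\,|\tXn|^p\,] \;\leq\; 2^{p-1}\bigl(\EE[\,|\hXn|^p\,] + \EE[\,|\tXn{-}\hXn|^p\,]\bigr).
\]
The first term on the right is uniformly bounded on $[0,T]$ by the standard Euler-Maruyama $L^p$ stability estimate (already invoked at the end of the proof of Lemma \ref{lemma:first_diff}, see \cite{kp92}). For the second term, Lemma \ref{lemma:first_diff} gives
\[
\EE[\,|\tXn{-}\hXn|^p\,] \;\leq\; \EE\!\left[\max_{0\leq m \leq N}|\tXm{-}\hXm|^p\right] \;\leq\; c\,\EE[\,|\tZ{-}Z|^p\,],
\]
and by Assumption \ref{assumption:Z} the right-hand side is bounded by $c\,\EE[\,|Z|^p\,]$, a finite constant independent of $h$ and $n$. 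This establishes the first bound.

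Second, for the drift and volatility, I would use the Lipschitz continuity from Assumption \ref{assumption:SDE}, which yields $|a(x)| \leq |a(0)| + L_a|x|$ and analogously for $b$. Hence
\[
\EE[\,|a(\tXn)|^p\,] \;\leq\; 2^{p-1}\bigl(|a(0)|^p + L_a^p\,\EE[\,|\tXn|^p\,]\bigr),
\]
and similarly for $b(\tXn)$; both are uniformly bounded by the first step. Adjusting the constant $c$ to absorb all the prefactors $2^{p-1}$, $L_a^p$, $L_b^p$, $|a(0)|^p$, $|b(0)|^p$ and the $\EE[\,|Z|^p\,]$ factor gives a single constant depending only on $X_0, a, b, T, p$.

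There is no real obstacle; the only thing to check carefully is that the constants in Lemma \ref{lemma:first_diff} and in the Euler-Maruyama moment bound are indeed independent of $h$ and of $n \leq N$, which both results already provide. The corollary is therefore immediate.
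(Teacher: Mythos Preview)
Your proof is correct and follows essentially the same approach as the paper: decompose $\tXn = \hXn + (\tXn-\hXn)$, bound the first piece via standard Euler--Maruyama moment bounds and the second via Lemma~\ref{lemma:first_diff} combined with Assumption~\ref{assumption:Z}, then use the linear-growth consequence of the Lipschitz assumption for $a$ and $b$. The paper's argument is identical in substance, just slightly more compressed in the first step.
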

\begin{proof}
The standard analysis of the Euler-Maruyama method proves that $\EE[\, |\hXn|^p]$
is uniformly bounded on $[0,T]$, and so it follows from Lemma \ref{lemma:first_diff} 
that there exist constants $c_1$, $c_2$ such that
\[
\EE[\, |\tXn|^p] 
\ \leq\ c_1 + c_2\, \EE[\, |\tZ{-}Z|^p]
\ \leq\ c_1 + c_2\, \EE[\, |Z|^p ]
\]
due to Assumption \ref{assumption:Z}.
Since $|a(\tXn)| \leq |a(0)| + L_a |\tXn|$, it follows that
\[
|a(\tXn)|^p \leq 2^{p-1} \left( |a(0)|^p + L^p_a\, |\tXn|^p \right),
\]
and therefore $\EE[\, |a(\tXn)|^p]$ can be uniformly bounded, and similarly
$\EE[\, |b(\tXn)|^p]$.
\end{proof}

\begin{corollary}
\label{corollary:delta_t}
For a fixed time interval $T\!=\!N h$, for any $p\!\geq\!2$ there exists a 
constant $c$ which depends on $X_0, a, b, T, p$ but not on $h$ or 
$\EE[\, |\tZ{-}Z|^p]$ such that
\[
\max_{0\leq n < N} \EE[\, |\tXnp{-}\tXn|^p] \leq c\, h^{p/2}, ~~~
\max_{0\leq n < N} \EE[\, |\hXnp{-}\hXn|^p] \leq c\, h^{p/2},
\]
and
\[
\max_{0\leq n < N} \EE\left[\, |(\tXnp{-}\tXn)-(\hXnp{-}\hXn)|^p\right] 
\leq c\, h^{p/2}\, \EE[\, |\tZ{-}Z|^p].
\]
\end{corollary}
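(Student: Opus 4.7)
The plan is to attack all three bounds by writing the one-step increments explicitly from equations (\ref{eq:original}) and (\ref{eq:approx}), expanding with the inequality $|\sum_i x_i|^p \leq K_p \sum_i |x_i|^p$, and then bounding each resulting term using Assumption \ref{assumption:Z}, Corollary \ref{corollary:bounds} (for $p$-th moment bounds of $a(\tXn)$, $b(\tXn)$ and their hat-counterparts via standard Euler-Maruyama analysis), Lemma \ref{lemma:first_diff} (for the difference $\tXn{-}\hXn$), and the Lipschitz bounds from Assumption \ref{assumption:SDE}. The independence of $\tZ_n$ and $Z_n$ from $\mathcal{F}_{t_n}$ lets us split expectations wherever they appear.

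For the first bound, the increment is $\tXnp{-}\tXn = a(\tXn)\, h + b(\tXn)\, h^{1/2}\, \tZ_n$, so I would split into two terms. The drift term contributes $h^p\, \EE[|a(\tXn)|^p] \leq c\, h^p \leq c\, T^{p/2} h^{p/2}$ (absorbing $T^{p/2}$ into the constant), and the diffusion term contributes $h^{p/2}\, \EE[|b(\tXn)|^p]\, \EE[|\tZ_n|^p]$, which is bounded by $c\, h^{p/2}$ using Corollary \ref{corollary:bounds} and the remark that $\EE[|\tZ|^p] \leq 2^p\, \EE[|Z|^p]$. The second bound is identical in form, with standard Euler-Maruyama moment bounds replacing Corollary \ref{corollary:bounds}.

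For the third bound, which is the one that carries the factor $\EE[|\tZ{-}Z|^p]$, I would write the difference of increments as
\[
(\tXnp{-}\tXn) - (\hXnp{-}\hXn) = (a(\tXn){-}a(\hXn))\, h + (b(\tXn){-}b(\hXn))\, h^{1/2}\, \tZ_n + b(\hXn)\, h^{1/2}\, (\tZ_n{-}Z_n),
\]
splitting the diffusion part through the standard $b(\tXn)\tZ_n - b(\hXn)Z_n = (b(\tXn){-}b(\hXn))\tZ_n + b(\hXn)(\tZ_n{-}Z_n)$ decomposition. Applying the inequality for sums raised to the $p$-th power and then expectation, the first term contributes $h^p L_a^p\, \EE[|\tXn{-}\hXn|^p]$, the second $h^{p/2} L_b^p\, \EE[|\tXn{-}\hXn|^p]\, \EE[|\tZ_n|^p]$, and the third $h^{p/2}\, \EE[|b(\hXn)|^p]\, \EE[|\tZ_n{-}Z_n|^p]$. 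Lemma \ref{lemma:first_diff} converts the first two into $c\, h^{p/2}\, \EE[|\tZ{-}Z|^p]$ (absorbing the excess power of $h$ in the drift term into a factor of $T^{p/2}$), while the third is already of the desired form.

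The main potential obstacle is pedantic: one must justify that $\tZ_n$ (resp.\ $\tZ_n{-}Z_n$) is independent of $\tXn$ and $\hXn$, which follows from the filtration structure in Assumption \ref{assumption:Z} and the construction of $(Z_n,\tZ_n)$ as i.i.d.\ pairs independent of the history. Beyond this adaptedness check, the argument is a straightforward bookkeeping exercise, because all the uniform moment bounds we need are already supplied by Corollary \ref{corollary:bounds} and the classical Euler-Maruyama moment estimates.
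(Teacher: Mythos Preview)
Your proposal is correct and follows essentially the same approach as the paper. The only cosmetic difference is that for the third assertion the paper rewrites the increment difference via the Mean Value Theorem as $a'(\xi_{1,n})D_n h + b'(\xi_{2,n})D_n h^{1/2}\tZ_n + b(\hXn)h^{1/2}(\tZ_n{-}Z_n)$ (re-using equation~(\ref{eq:diff}) from the proof of Lemma~\ref{lemma:first_diff}) before bounding $|a'|\le L_a$, $|b'|\le L_b$, whereas you go straight to the Lipschitz bounds; this is the same estimate.
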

\begin{proof}
Since
\[
\EE[\, |\tXnp{-}\tXn|^p]
\ \leq\ 2^{p-1} \left( \EE[\,|a(\tXn)|^p] \, h^p + \EE[\,|b(\tXn)|^p]\, h^{p/2} \EE[\,|\tZ_n|^p] \right),
\]
the first assertion follows from the uniform boundedness of $\EE[\,|a(\tXn)|^p]$ and $\EE[\,|b(\tXn)|^p]$,
together with the trivial inequality $h^p\!\leq\! h^{p/2}T^{p/2}$ and the $\EE[\,|\tZ_n|^p]$ bound due
to Assumption \ref{assumption:Z}. The second assertion follows similarly.

Re-arranging Equation (\ref{eq:diff}) gives
\[
D_{n+1}-D_n = a'(\xi_{1,n})\, D_n h + b'(\xi_{2,n}) D_n h^{1/2} \tZ_n
                                 + b(\hXn) \, h^{1/2} (\tZ_n{-}Z_n),
\]
with $D_n \equiv \tXn{-}\hXn$. Hence,
\begin{eqnarray*}
  \lefteqn{  \EE[\, |(\tXnp{-}\tXn)-(\hXnp{-}\hXn)|^p]}
\\ &\leq& 3^{p-1}\left(
    L_a^p\, \EE[\, |D_n|^p]\, h^p
    + L_b^p\, \EE[\, |D_n|^p]\, h^{p/2}\, \EE[\,|\tZ_n|^p]
    + \EE[\, |b(\hXn)|^p]\, h^{p/2}\, \EE[\,|\tZ{-}Z|^p] \right),
\end{eqnarray*}
and the third assertion follows from the bounds on $\EE[\, |\tXn{-}\hXn|^p]$,
and $\EE[\, |b(\hXn)|^p]$.
\end{proof}

We now have the first MLMC results involving level $\ell$ fine paths
and level $\ell{-}1$ coarse paths, as defined in Equations
(\ref{eq:hf})--(\ref{eq:tc}).

\begin{lemma}
\label{lemma:first_diff_b}
For a fixed time interval $T\!=\!N h$, for any $p\!\geq\!2$ there exists a 
constant $c$ which depends on $X_0, a, b, T, p$, but not on $h$ or 
$\EE[\, |\tZ{-}Z|^p]$, such that for any $0\!<\!n\!\leq\!N$
\[
\EE\left[ \max_{0< n \leq N} |\tXfn{-}\hXfn|^p\right] \leq c\, \EE[\, |\tZ{-}Z|^p], ~~~ 
\EE\left[ \max_{0< n \leq N} |\tXcn{-}\hXcn|^p\right] \leq c\, \EE[\, |\tZ{-}Z|^p].
\]
\end{lemma}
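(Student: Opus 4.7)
The plan is to derive both bounds by reduction to Lemma \ref{lemma:key}, with the fine path being essentially immediate from Lemma \ref{lemma:first_diff}.

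For the fine path, equations (\ref{eq:hf}) and (\ref{eq:tf}) are structurally identical to (\ref{eq:original}) and (\ref{eq:approx}) with $\hXfn$, $\tXfn$ in place of $\hXn$, $\tXn$, so Lemma \ref{lemma:first_diff} applies verbatim and yields the first inequality.

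For the coarse path, I would set $D_n \equiv \tXcn {-} \hXcn$, subtract (\ref{eq:hc}) from (\ref{eq:tc}), apply the Mean Value Theorem to $a(\tXcl) {-} a(\hXcl)$ and $b(\tXcl) {-} b(\hXcl)$, and split the diffusion term via $b(\tXcl)\tZ_n - b(\hXcl) Z_n = [b(\tXcl) {-} b(\hXcl)]\tZ_n + b(\hXcl)(\tZ_n {-} Z_n)$. This gives
\[
D_{n+1} = D_n + a'(\xi_{1,n})\, D_{\underline{n}}\, h + b'(\xi_{2,n})\, D_{\underline{n}}\, h^{1/2} \tZ_n + b(\hXcl)\, h^{1/2} (\tZ_n {-} Z_n),
\]
which matches the recurrence of Lemma \ref{lemma:key} except that the drift and diffusion coefficients now multiply $D_{\underline{n}}$ rather than $D_n$. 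The remaining hypotheses are easy: $|a'| \leq L_a$ and $|b'| \leq L_b$ by Lipschitz continuity of $a, b$; $\Psi_n \equiv b(\hXcl)(\tZ_n {-} Z_n)$ is $\mathcal{F}_{t_{n+1}}$-adapted with $\EE[\Psi_n | \mathcal{F}_{t_n}] = 0$ since $(Z_n, \tZ_n)$ is independent of $\mathcal{F}_{t_n}$ and both have mean zero; and independence together with the standard Euler-Maruyama moment bound applied to the coarse scheme gives $\EE[|\Psi_n|^p] = \EE[|b(\hXcl)|^p]\, \EE[|\tZ {-} Z|^p] \leq c\, \EE[|\tZ {-} Z|^p]$.

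The main (and only mild) obstacle is that Lemma \ref{lemma:key} as stated has $D_n$ multiplying $A_n, B_n$, not $D_{\underline{n}}$. I would argue its proof extends unchanged to this lookback form: the drift estimate only uses $|D_{\underline{m}}| \leq \max_{m' \leq m} |D_{m'}|$, which still holds since $\underline{m} \leq m$, so the Gr\"onwall step is untouched; and the discrete Burkholder-Davis-Gundy step still applies because $B_m D_{\underline{m}}$ remains $\mathcal{F}_{t_m}$-adapted, so $\sum_m B_m D_{\underline{m}} h^{1/2} \tZ_m$ is a martingale in $n$. Applying this extension with $c_2 = 0$ and $c_1 = c\, \EE[|\tZ {-} Z|^p]$ delivers the coarse-path inequality.
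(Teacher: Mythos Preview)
Your proof is correct, but the coarse-path argument takes a genuinely different route from the paper's.

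For the fine path you and the paper agree verbatim. For the coarse path, the paper does \emph{not} work with the recurrence at the fine timestep and does not need to extend Lemma~\ref{lemma:key} to a lookback form. Instead, it observes that if one sets
\[
Z_3 = (Z_1{+}Z_2)/\sqrt{2}, \qquad \tZ_3 = (\tZ_1{+}\tZ_2)/\sqrt{2},
\]
then the coarse path at its natural step size $2h$ is an ordinary Euler--Maruyama scheme driven by the pair $(Z_3,\tZ_3)$. One checks that $(Z_3,\tZ_3)$ again satisfies Assumption~\ref{assumption:Z} up to a harmless factor $2^{p/2}$ in the moment bound, so Lemma~\ref{lemma:first_diff} applies directly with $h$ replaced by $2h$. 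This is precisely why the paper phrased Assumption~\ref{assumption:Z} abstractly rather than tying $(Z,\tZ)$ to a single uniform $U$.

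Your approach keeps the fine-grid recurrence and pays the price of having $D_{\underline n}$ in place of $D_n$, which forces you to revisit the proof of Lemma~\ref{lemma:key}. The extension you sketch is valid: $\underline m \le m$ gives $|D_{\underline m}| \le \max_{m'\le m}|D_{m'}|$, and $B_m D_{\underline m}$ is still ${\cal F}_{t_m}$-adapted, so both the Jensen/Gr\"onwall and BDG steps go through unchanged. The advantage of your route is that it never leaves the original noise pairs $(Z_n,\tZ_n)$ and would adapt without thought to other refinement ratios; the advantage of the paper's route is that it reuses Lemma~\ref{lemma:first_diff} as a black box and requires no tinkering with Lemma~\ref{lemma:key}.
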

\begin{proof}
The first assertion comes immediately from Lemma \ref{lemma:first_diff}, 
but the second assertion requires the observation that if we set
\[
 Z_3  = (  Z_1{+}  Z_2)/\sqrt{2}, ~~~~
\tZ_3 = (\tZ_1{+}\tZ_2)/\sqrt{2}, 
\]
where the independent pairs $(Z_1, \tZ_1)$ and $(Z_2, \tZ_2)$ satisfy 
Assumption \ref{assumption:Z}, then $Z_3 \sim N(0,1)$, $\EE[\tZ_3]\!=\!0$, 
and by Jensen's inequality
\[
\EE\left[\, | \tZ_3 {-} Z_3 |^p \right] \ \leq\ 
2^{p/2-1} \left( \EE[\, |\tZ_1{-}Z_1|^p] + \EE[\, |\tZ_2{-}Z_2|^p] \right)
\ \leq\ 2^{p/2\,} \EE[\, |Z|^p].
\]
Therefore the pair $(Z_3, \tZ_3)$ also satisfies Assumption 
\ref{assumption:Z} apart from an increased bound on 
$\EE\left[\, | \tZ_3 {-} Z_3 |^p \right]$.  This requires minor 
changes to the constants in the subsequent lemmas, but in the end 
the desired result follows from Lemma \ref{lemma:first_diff}.
\end{proof}

\begin{lemma}
\label{lemma:delta_h}
For a fixed time interval $T\!=\!N h$, for any $p\!\geq\!2$ there exists a 
constant $c$ which depends on $X_0, a, b, T, p$, but not on $h$ or 
$\EE[\, |\tZ{-}Z|^p]$, such that for any $0\!<\!n\!\leq\!N$
\[
\EE\left[ \max_{0< n \leq N} |\tXfn{-}\tXcn|^p\right] \leq c\, h^{p/2}, ~~~ 
\EE\left[ \max_{0< n \leq N} |\hXfn{-}\hXcn|^p\right] \leq c\, h^{p/2}, ~~~ 
\]
\end{lemma}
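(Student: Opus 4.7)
The plan is to apply Lemma \ref{lemma:key} to the differences $D_n \equiv \hXfn - \hXcn$ and $\widetilde{D}_n \equiv \tXfn - \tXcn$, exploiting the same Mean-Value-Theorem linearisation used in the proof of Lemma \ref{lemma:first_diff}. For the first assertion, subtracting (\ref{eq:hf}) and (\ref{eq:hc}) gives
\[
D_{n+1} - D_n = \bigl(a(\hXfn) {-} a(\hXcl)\bigr)\, h + \bigl(b(\hXfn) {-} b(\hXcl)\bigr)\, h^{1/2}\, Z_n.
\]
Applying the Mean Value Theorem (Lemma \ref{lemma:MVT}) with the decomposition $\hXfn - \hXcl = D_n + (\hXcn - \hXcl)$ casts this into exactly the form required by Lemma \ref{lemma:key}, with $A_n = a'(\xi_{1,n})$, $B_n = b'(\xi_{2,n})$, $\Theta_n = a'(\xi_{1,n})(\hXcn{-}\hXcl)$, and $\Psi_n = b'(\xi_{2,n})(\hXcn{-}\hXcl)\, Z_n$, invoking the lemma in the special case in which $\tZ_n$ is replaced by $Z_n$ (as explicitly noted after the lemma).

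Next I would verify the hypotheses. The bounds $|A_n| \leq L_a$ and $|B_n| \leq L_b$ are immediate from Assumption \ref{assumption:SDE}. The multiplier in $\Psi_n$ is $\mathcal{F}_{t_n}$-measurable while $Z_n$ has mean zero independent of $\mathcal{F}_{t_n}$, so $\EE[\Psi_n | \mathcal{F}_{t_n}] = 0$ and $\EE[|\Psi_n|^p]$ factorises. The whole estimate then reduces to controlling $\EE[\,|\hXcn - \hXcl|^p]$: this vanishes for even $n$, whereas for odd $n$ it equals a single frozen Euler step $a(\hXcl)\, h + b(\hXcl)\, h^{1/2}\, Z_{n-1}$, so the standard Euler-Maruyama moment bounds on $\hXc$ (which is itself an Euler-Maruyama scheme with timestep $2h$) yield $\EE[\,|\hXcn - \hXcl|^p] \leq c\, h^{p/2}$. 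Thus $c_1$ and $c_2$ in Lemma \ref{lemma:key} are both $O(h^{p/2})$, and the conclusion gives $\EE[\max_n |D_n|^p] \leq c\, h^{p/2}$.

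The second assertion follows by rerunning the argument with $Z_n$ replaced by $\tZ_n$ and working from (\ref{eq:tf})--(\ref{eq:tc}). Assumption \ref{assumption:Z} supplies $\EE[\tZ_n] = 0$ (so that the martingale-difference condition on $\Psi_n$ still holds) and $\EE[\,|\tZ_n|^p] \leq 2^p\, \EE[\,|Z|^p]$ (needed in the moment bound for $\Psi_n$), so Lemma \ref{lemma:key} applies again. The remaining ingredient is $\EE[\,|\tXcn - \tXcl|^p] \leq c\, h^{p/2}$, which follows from the uniform bounds on $\EE[\,|a(\tXcl)|^p]$ and $\EE[\,|b(\tXcl)|^p]$ supplied by Corollary \ref{corollary:bounds} applied to the coarse path. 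Because these moment bounds use only Assumption \ref{assumption:Z} and not the specific size of $\EE[\,|\tZ{-}Z|^p]$, the final constant $c$ is independent of this quantity as required.

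The main obstacle is purely bookkeeping: one must ensure that Corollary \ref{corollary:bounds}, stated for the fine-step approximate scheme, extends to the coarse paths $\hXc$ and $\tXc$, and that the factor-of-two timestep is correctly tracked in the standard Euler-Maruyama moment bounds. The sum-of-pairs observation from Lemma \ref{lemma:first_diff_b} confirms that the coarse path is itself an instance of the approximate scheme (\ref{eq:original}) with timestep $2h$ and a Normal pair that still satisfies Assumption \ref{assumption:Z} up to constants, so no new ideas are needed and the whole proof is a direct adaptation of the Lemma \ref{lemma:first_diff} template.
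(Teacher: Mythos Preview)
Your proposal is correct and follows essentially the same route as the paper: cast $D_{n+1}-D_n$ into the form required by Lemma~\ref{lemma:key}, with the $\Theta_n$ and $\Psi_n$ terms controlled by the one-step coarse increment $\hXcn-\hXcl$ (or $\tXcn-\tXcl$) via Corollary~\ref{corollary:delta_t}/\ref{corollary:bounds}. The only cosmetic difference is that the paper first writes $a(\tXfn)-a(\tXcl)=\bigl(a(\tXfn)-a(\tXcn)\bigr)+\bigl(a(\tXcn)-a(\tXcl)\bigr)$ and applies the Mean Value Theorem to each piece separately (yielding distinct $\xi_{1,n},\xi_{3,n}$), whereas you apply it once to $a(\hXfn)-a(\hXcl)$ and then split the resulting factor $\hXfn-\hXcl=D_n+(\hXcn-\hXcl)$; both lead to the same $A_n,B_n,\Theta_n,\Psi_n$ structure and the same moment bounds.
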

\begin{proof}
Defining $D_n \equiv \tXfn {-} \tXcn$, and taking the difference between 
equations (\ref{eq:tf}) and (\ref{eq:tc}) gives
\begin{eqnarray*}
D_{n+1} &=& D_n + (a(\tXfn)-a(\tXcn))\, h + (b(\tXfn)-b(\tXcn)) \, h^{1/2} \tZ_n
\\     &&~~~~ + (a(\tXcn)-a(\tXcl)) \, h + (b(\tXcn)-b(\tXcl)) \, h^{1/2} \tZ_n
\\     &=& D_n + a'(\xi_{1,n})\, D_n h + b'(\xi_{2,n}) D_n h^{1/2} \tZ_n
\\     &&~~~~ + a'(\xi_{3,n})\, (\tXcn{-}\tXcl) \, h
          +  b'(\xi_{4,n})\, (\tXcn{-}\tXcl) \, h^{1/2} \tZ_n
\end{eqnarray*}
for suitably defined $\xi_{1,n}$, $\xi_{2,n}, \xi_{3,n}$, $\xi_{4,n}$ arising 
from the Mean Value Theorem.  
Noting that
\[
\EE[\,|a'(\xi_{3,n})\, (\tXcn{-}\tXcl) |^p] \leq
L_a^p \ \EE[\,|\tXcn{-}\tXcl|^p],
\]
and
\[
\EE[\,|b'(\xi_{4,n})\, (\tXcn{-}\tXcl)  \tZ_n|^p] 
\ \leq\ L_b^p \ \EE[\,|\tXcn{-}\tXcl|^p]\ \EE[\,|\tZ|^p]
\ \leq\ L_b^p\, 2^{p} \, \EE[\,|Z|^p]\ \EE[\,|\tXcn{-}\tXcl|^p],
\]
the first assertion then follows again from 
Lemma \ref{lemma:key} after using the bounds for $\EE[\,|\tXcn{-}\tXcl|^p]$ 
which come from Corollary \ref{corollary:delta_t}.

The second assertion follows similarly.
\end{proof}

We now come to the analysis of the cross-difference.

\begin{lemma}
\label{lemma:second_diff}
For a fixed time interval $T\!=\!N h$, for any $p, q$ with 
$2\!\leq\! p \!<\! q$ there exists a constant $c$ which depends on $
X_0, a, b, T$ but not on $h$ or $\EE[\, |\tZ{-}Z|^q]$, such that
\[
\EE\left[ \max_{0\leq n < N} \left|\tXfn{-}\tXcn {-} \hXfn{+}\hXcn\right|^p\right] 
\leq c\, h^{p/2} \left( \EE[\,|\tZ{-}Z|^q] \right)^{p/q}.
\]
\end{lemma}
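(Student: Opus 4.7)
The plan is to derive a recursion for $D_n := (\tXfn - \tXcn) - (\hXfn - \hXcn)$ and apply Lemma~\ref{lemma:key}. Writing $E_n^f := \tXfn - \hXfn$ and $E_n^c := \tXcn - \hXcn$, so $D_n = E_n^f - E_n^c$, subtraction of equations (\ref{eq:hf})--(\ref{eq:tc}) gives
\[
D_{n+1} = D_n + \Delta a\, h + \Delta b\, \sqrt{h}\, \tZ_n + (b(\hXfn) - b(\hXcl))\, \sqrt{h}\, (\tZ_n - Z_n),
\]
where $\Delta a := [a(\tXfn)-a(\tXcl)] - [a(\hXfn)-a(\hXcl)]$ and $\Delta b$ is defined analogously. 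The goal is to cast this in the form of Lemma~\ref{lemma:key} with uniformly bounded linear-in-$D_n$ coefficients and forcing terms $\Theta_n, \Psi_n$ whose $L^p$-norms are of order $h^{p/2}\, (\EE[|\tZ-Z|^q])^{p/q}$, after which the conclusion follows.

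The main technical device is to decompose $\Delta a$ (and analogously $\Delta b$) using the integral form of the mean value theorem along the fine/coarse axis:
\[
a(\tXfn) - a(\tXcl) = \bar A_n \, (\tXfn - \tXcl),\qquad \bar A_n := {\textstyle \int_0^1} a'\big(\tXcl + t(\tXfn - \tXcl)\big)\, \D t,
\]
and likewise $a(\hXfn) - a(\hXcl) = \bar A_n' (\hXfn - \hXcl)$, so that
\[
\Delta a = \bar A_n \big[(\tXfn - \tXcl) - (\hXfn - \hXcl)\big] + (\bar A_n - \bar A_n')\,(\hXfn - \hXcl).
\]
Since $(\tXfn - \tXcl) - (\hXfn - \hXcl) = E_n^f - E_{\underline{n}}^c = D_n + (E_n^c - E_{\underline{n}}^c)$, the $\bar A_n D_n$ piece is absorbed into the linear coefficient of Lemma~\ref{lemma:key} (bounded by $L_a$), and what remains contributes $\Theta_n = \bar A_n (E_n^c - E_{\underline{n}}^c) + (\bar A_n - \bar A_n')(\hXfn - \hXcl)$. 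The $\Psi_n$ term is built analogously from $\Delta b\,\tZ_n$ together with $(b(\hXfn) - b(\hXcl))(\tZ_n - Z_n)$; independence of $\tZ_n$ and $\tZ_n - Z_n$ from the $\mathcal{F}_{t_n}$-measurable coefficients makes $\Psi_n$ conditionally mean zero.

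Bounding $\EE[|\Theta_n|^p]$ reduces to two prototypes. For $\bar A_n(E_n^c - E_{\underline{n}}^c)$, the coarse-path analogue of Corollary~\ref{corollary:delta_t} gives $\EE[|E_n^c - E_{\underline{n}}^c|^p] \leq c\, h^{p/2} \EE[|\tZ-Z|^p]$ (with the difference vanishing for even $n$), which Jensen's inequality upgrades to $c\, h^{p/2}\, (\EE[|\tZ-Z|^q])^{p/q}$ since $q > p$. For $(\bar A_n - \bar A_n')(\hXfn - \hXcl)$, Lipschitz continuity of $a'$ applied to $\mu_t - \nu_t = (1-t)E_{\underline{n}}^c + t E_n^f$ yields $|\bar A_n - \bar A_n'| \leq \tfrac{1}{2}L'_a(|E_n^f| + |E_{\underline{n}}^c|)$; H\"older's inequality with conjugate exponents $q/p$ and $q/(q-p)$ then splits the $p$-th moment of this product as $\EE[|E|^q]^{p/q} \cdot \EE[|\hXfn - \hXcl|^{pq/(q-p)}]^{(q-p)/q}$, which by Lemmas~\ref{lemma:first_diff_b} and~\ref{lemma:delta_h} is $\leq c\, (\EE[|\tZ-Z|^q])^{p/q} \cdot h^{p/2}$. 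The $\EE[|\Psi_n|^p]$ bound follows the same template, gaining a bounded factor of $\EE[|\tZ_n|^p]$ or $\EE[|\tZ-Z|^p]$ from the independent multiplicative noise.

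The hard part is arriving at this decomposition. The seemingly natural alternative of applying the one-dimensional MVT on the exact/approximate axis---writing $a(\tXfn) - a(\hXfn) = a'(\xi_1) E_n^f$ and $a(\tXcl) - a(\hXcl) = a'(\xi_2) E_{\underline{n}}^c$---produces a remainder in which the coefficient difference $a'(\xi_1) - a'(\xi_2)$ multiplies $E_{\underline{n}}^c$ rather than $\hXfn - \hXcl$. The resulting ``error times error'' cross-products such as $|E_n^f| \cdot |E_{\underline{n}}^c|$ carry no $h^{1/2}$ improvement, and no H\"older split can recover the required $h^{p/2}$ factor. The integral MVT along the fine/coarse axis instead places $|\hXfn - \hXcl|$ in exactly the right position, with the restriction $q > p$ being exactly what the conjugate-exponent estimate needs to close. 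With $\Theta_n$ and $\Psi_n$ bounded as above, Lemma~\ref{lemma:key} delivers the stated bound on $\EE[\max_n |D_n|^p]$.
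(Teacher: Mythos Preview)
Your proof is correct and follows essentially the same strategy as the paper: derive a recursion for $D_n$, extract a bounded linear-in-$D_n$ part from the drift and diffusion increments, bound the remaining forcing terms via H\"older's inequality combined with Lemmas~\ref{lemma:first_diff_b}, \ref{lemma:delta_h} and Corollary~\ref{corollary:delta_t}, and then invoke Lemma~\ref{lemma:key}. The only notable technical difference is in the decomposition of $\Delta a$ (and $\Delta b$): the paper first splits $\Delta a$ as $[a(\tXfn){-}a(\tXcn){-}a(\hXfn){+}a(\hXcn)] + [a(\tXcn){-}a(\tXcl){-}a(\hXcn){+}a(\hXcl)]$ and applies the four-point mean value Lemma~\ref{lemma:MVT2} to each piece, yielding remainders $R_{1,n},R_{3,n}$ (and $R_{2,n},R_{4,n}$ for $b$), whereas you keep $\Delta a$ whole and apply the integral mean value theorem along the fine/coarse segment, which collapses the two $R$-terms into the single product $(\bar A_n-\bar A_n')(\hXfn-\hXcl)$; this is marginally more economical and avoids invoking Lemma~\ref{lemma:MVT2}, but the resulting estimates and the role of the constraint $p<q$ in the H\"older split are identical.
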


\begin{proof}
Defining $D_n \equiv \tXfn{-}\tXcn {-} \hXfn{+}\hXcn$, 
then the difference of Equations (\ref{eq:hf})--(\ref{eq:tc}) 
together with Lemma \ref{lemma:MVT2} gives
\begin{eqnarray*}
\lefteqn{D_{n+1}}
\\ &=& D_n
+ \left( a(\tXfn) {-} a(\tXcn) {-} a(\hXfn) {+} a(\hXcn) \right) h
+ \left( b(\tXfn) {-} b(\tXcn) {-} b(\hXfn) {+} b(\hXcn) \right) h^{1/2} \tZ_n
\\ && ~~~~
+ \left( a(\tXcn) {-} a(\tXcl) {-} a(\hXcn) {+} a(\hXcl) \right) h
+ \left( b(\tXcn) {-} b(\tXcl) {-} b(\hXcn) {+} b(\hXcl) \right) h^{1/2} \tZ_n
\\ && \hspace{3.4in}
+ \left( b(\hXfn) {-} b(\hXcl) \right) h^{1/2} (\tZ_n {-} Z_n)
\\ &=& D_n
+ a'(\xi_{1,n})\, D_n\, h
+ b'(\xi_{2,n})\, D_n\, h^{1/2} \tZ_n
\\ && ~~~~
+ a'(\xi_{3,n}) \left( \tXcn {-} \tXcl {-} \hXcn {+} \hXcl \right) h
+ b'(\xi_{4,n}) \left( \tXcn {-} \tXcl {-} \hXcn {+} \hXcl \right) h^{1/2} \tZ_n
\\ && ~~~~ +\ (R_{1,n} +  R_{3,n})\, h + (R_{2,n} + R_{4,n})\, h^{1/2} \tZ_n
+ b'(\xi_{5,n}) \left( \hXfn {-} \hXcl \right) h^{1/2} (\tZ_n {-} Z_n)
\end{eqnarray*}
for suitably defined $\xi_{1,n}, \xi_{2,n}, \xi_{3,n}$, $\xi_{4,n}, \xi_{5,n}$ 
arising from Lemma \ref{lemma:MVT2} and the Mean Value Theorem, and with
\begin{eqnarray*}
|R_{1,n}| &\leq& \fracs{1}{2} L'_a
 \left( |\tXfn {-} \tXcn| + |\hXfn {-} \hXcn| \right)
 \left( |\tXfn {-} \hXfn| + |\tXcn {-} \hXcn| \right) \\
|R_{2,n}| &\leq& \fracs{1}{2} L'_b
 \left( |\tXfn {-} \tXcn| + |\hXfn {-} \hXcn| \right)
 \left( |\tXfn {-} \hXfn| + |\tXcn {-} \hXcn| \right) \\
|R_{3,n}| &\leq& \fracs{1}{2} L'_a
 \left( |\tXcn {-} \tXcl| + |\hXcn {-} \hXcl| \right)
 \left( |\tXcn {-} \hXcn| + |\tXcl {-} \hXcl| \right) \\
|R_{4,n}| &\leq& \fracs{1}{2} L'_b
 \left( |\tXcn {-} \tXcl| + |\hXcn {-} \hXcl| \right)
 \left( |\tXcn {-} \hXcn| + |\tXcl {-} \hXcl| \right).
\end{eqnarray*}
This equation is in the correct form for the application of Lemma \ref{lemma:key}
with
\begin{eqnarray*}
  \Theta_n &=&   a'(\xi_{3,n}) \left( \tXcn {-} \tXcl {-} \hXcn {+} \hXcl \right)
+ (R_{1,n} +  R_{3,n}), \\
  \Psi_n &=&  b'(\xi_{4,n}) \left( \tXcn {-} \tXcl {-} \hXcn {+} \hXcl \right) \tZ_n
+ (R_{2,n} + R_{4,n})\, \tZ_n\\
&& +\ b'(\xi_{5,n}) \left( \hXfn {-} \hXcl \right) (\tZ_n {-} Z_n).
\end{eqnarray*}

Corollary \ref{corollary:delta_t} and Lemma \ref{lemma:first_diff} together 
with the H\"older inequality imply that there exists a constant $c$ such that
\begin{eqnarray*}
\EE[\, |\tXfn {-} \tXcn|^p |\tXfn {-} \hXfn|^p ]
&\leq& \left( \EE[\,|\tXfn {-} \tXcn|^{p/(1-p/q)}] \right)^{1 - p/q}
       \left( \EE[\,|\tXfn {-} \hXfn|^q] \right)^{p/q}
\\  &\leq& c\, h^{p/2} \left(\EE[\, |\tZ{-}Z|^q]\right)^{p/q}.
\end{eqnarray*}
Bounding the other terms similarly, there is a different constant $c$ such that
\[
\EE[\, |R_{i,n}|^p] \leq  c\ h^{p/2} \left(\,\EE[\,|\tZ{-}Z|^q] \right)^{p/q}, ~~~ i = 1, 2, 3, 4.
\]
Due to Corollary \ref{corollary:delta_t} we also have
\[
\EE[ \, |\tXcn {-} \tXcl {-} \hXcn {+} \hXcl |^p ] 
\leq c\, h^{p/2} \left(\EE[\,|\tZ{-}Z|^q] \right)^{p/q}
\]
for some constant $c$, and finally, for another constant $c$,
\[
\EE[ \, |  ( \hXfn {-} \hXcl ) (\tZ_n {-} Z_n)|^p ]
= \EE[ \, | \hXfn {-} \hXcl|^p] ~ \EE[\, |\tZ {-} Z|^p]
\leq c \, h^{p/2}\, \left( \EE[\,|\tZ{-}Z|^q] \right)^{p/q}.
\]

Hence, we end up concluding that there exists another constant $c$ such that
\[
\EE[\, |\Psi_n|^p ] \leq c\, h^{p/2} \left( \EE[\,|\tZ{-}Z|^q] \right)^{p/q}, ~~~
\EE[\, |\Theta_n|^p ] \leq c\, h^{p/2} \left( \EE[\,|\tZ{-}Z|^q] \right)^{p/q},
\]
and then Lemma \ref{lemma:key} gives us the desired final result.
\end{proof}

\if 0

{\bf NOTE: the next lemma will probably be removed because the 
subsequent lemma now supersede it.}

\begin{lemma}
For a fixed time interval $T\!=\!N h$, if the function $f: \RR \rightarrow \RR$
is $C^1(\RR)$ and $f$ and $f'$ are both Lipschitz continuous with Lipschitz constants 
$L_f$ and $L'_f$, respectively, then for any $q\!>\!2$ there exists a constant $c$ which 
depends on $X_0, a, b, f, T$ but not on $h$ or $\EE[\, |\tZ{-}Z|^q]$, such that
\[
\VV\left[ f(\hXf_N){-}f(\hXc_N) {-} f(\tXf_N){+}f(\tXc_N)\right] 
\leq c\, h \left(\EE[\, |\tZ{-}Z|^q]\right)^{2/q}.
\]
\end{lemma}
\begin{proof}
Given the assumption on $f$, we are able to use Lemma \ref{lemma:MVT2} to obtain
\[
f(\hXf_N){-}f(\hXc_N) {-} f(\tXf_N){+}f(\tXc_N)
= f'(\xi)\ (\hXf_N{-}\hXc_N{-}\tXf_N{+}\tXc_N)
+ R,
\]
where
\[
|R| \leq \fracs{1}{2} L'_f (|\hXf_N{-}\hXc_N| + |\tXf_N{-}\tXc_N|)\ (|\hXf_N{-}\tXf_N| + |\hXc_N{-}\tXc_N|).
\]
Hence,
\[
\VV\left[ f(\hXf_N){-}f(\hXc_N) {-} f(\tXf_N){+}f(\tXc_N)\right] 
\leq 2 L_f^2\, \EE\left[\, |\hXf_N{-}\hXc_N{-}\tXf_N{+}\tXc_N|^2\right]
 + 2\, \EE[R^2]. 
\]
Due to H\"older's inequality, 
\[
\EE[\, |\hXf_N{-}\hXc_N|^2 \, |\hXf_N{-}\tXf_N|^2]
\leq
\left( \EE[\,|\hXf_N{-}\hXc_N|^{2/(1-2/q)}] \right)^{1 - 2/q}
\left( \EE[\,|\hXf_N{-}\tXf_N|^q] \right)^{2/q}.
\]
The other terms in $\EE[R^2]$ can be bounded similarly, and therefore due to the bounds
from Lemmas \ref{lemma:first_diff} and \ref{lemma:delta_h} there exists a constant $c$,
not depending on $h$ or $\EE[\,|\tZ{-}Z|^q]$, such that
\[
\EE[R^2] \leq c\, h \left(\EE[\,|\tZ{-}Z|^q]\right)^{2/q}.
\]
The bound for $\EE[\, |\hXf_N{-}\hXc_N{-}\tXf_N{+}\tXc_N|^2]$ in Lemma
\ref{lemma:second_diff} completes the proof.
\end{proof}

Although the functions $f$ which arise in financial applications are often 
globally Lipschitz, in other applications $f(x)$ may increase polynomially
as $|x|\rightarrow \infty$.  The next lemma addresses this case.
\fi

We now obtain a lemma for output functions $f(x)$ which are locally
Lipschitz with at worst a polynomial growth as $|x|\rightarrow\infty$.

\begin{lemma}
\label{lemma:polynomial}
For a fixed time interval $T\!=\!N h$, if the function $f: \RR \rightarrow \RR$
is $C^1(\RR)$ and there is an exponent $r>0$ and constants $L_f, L'_f$ such that
\[
|f(x)-f(y)| \leq L_f\, (1+|x|^r+|y|^r)\ |x{-}y|, ~~~
|f'(x)-f'(y)| \leq L'_f\, (1+|x|^r+|y|^r)\ |x{-}y|,
\]
then for any $q\!>\!2$ there exists a constant $c$ which depends on 
$X_0, a, b, f, T$, but not on $h$ or $\EE[\, |\tZ{-}Z|^q]$, such that
\[
\VV\left[ f(\hXf_N){-}f(\hXc_N) {-} f(\tXf_N){+}f(\tX_Nc)\right] 
\leq c\, h \left(\EE[\, |\tZ{-}Z|^q]\right)^{2/q}.
\]
\end{lemma}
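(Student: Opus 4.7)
The plan is to imitate the structure of the deleted globally Lipschitz lemma in the preceding paragraph, but accommodate the polynomial growth by systematically using H\"older's inequality together with the fact that \emph{all} moments of $\hXf_N, \hXc_N, \tXf_N, \tXc_N$ are uniformly bounded in $h$, by Corollary \ref{corollary:bounds} and the standard Euler--Maruyama moment bounds.

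First, I would apply Lemma \ref{lemma:MVT2} (the second-order mean value type identity) to the output quadruple difference, writing
\[
f(\hXf_N) - f(\hXc_N) - f(\tXf_N) + f(\tXc_N) \;=\; f'(\xi)\,\bigl(\hXf_N - \hXc_N - \tXf_N + \tXc_N\bigr) + R,
\]
where $\xi$ lies between the relevant path values and
\[
|R| \;\leq\; \fracs{1}{2} L'_f \,\bigl(1 + |\hXf_N|^r + |\hXc_N|^r + |\tXf_N|^r + |\tXc_N|^r\bigr)\,
\bigl(|\hXf_N{-}\hXc_N| + |\tXf_N{-}\tXc_N|\bigr)\bigl(|\hXf_N{-}\tXf_N| + |\hXc_N{-}\tXc_N|\bigr).
\]
From the hypothesis on $f$ it also follows that $|f'(\xi)|$ is controlled by a polynomial in the four path endpoints of degree $r{+}1$, so all of its moments are finite by Corollary \ref{corollary:bounds}. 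Then bounding the variance by the second moment,
\[
\VV\!\left[f(\hXf_N) - f(\hXc_N) - f(\tXf_N) + f(\tXc_N)\right]
\leq 2\,\EE\!\left[|f'(\xi)|^2\,\bigl|\hXf_N {-} \hXc_N {-} \tXf_N {+} \tXc_N\bigr|^2\right] + 2\,\EE[R^2].
\]

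For the first term, I would choose a conjugate pair $1/s + 1/s' = 1$ with $s'$ close to $1$ so that $2s' < q$, and apply H\"older to separate the polynomial factor from the second-order path difference:
\[
\EE\!\left[|f'(\xi)|^2\,|\hXf_N {-} \hXc_N {-} \tXf_N {+} \tXc_N|^2\right]
\leq \bigl(\EE[|f'(\xi)|^{2s}]\bigr)^{1/s}\,\bigl(\EE[|\hXf_N {-} \hXc_N {-} \tXf_N {+} \tXc_N|^{2s'}]\bigr)^{1/s'}.
\]
The first factor is uniformly bounded in $h$, while Lemma \ref{lemma:second_diff} applied with $p = 2s'$ bounds the second factor by $c\,h\,\bigl(\EE[|\tZ{-}Z|^q]\bigr)^{2/q}$, as required.

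For the remainder $\EE[R^2]$, I would apply a triple H\"older splitting: a high-moment factor for the polynomial $(1 + \sum|X|^r)$, which is uniformly bounded in $h$; a high-moment factor for the across-level difference $|\hXf_N {-} \hXc_N|$ (or $|\tXf_N {-} \tXc_N|$), controlled by Lemma \ref{lemma:delta_h} producing the $h^{1/2}$ factor; and the limiting exponent $q$ for the approximation difference $|\hXf_N {-} \tXf_N|$ (or $|\hXc_N {-} \tXc_N|$), controlled by Lemma \ref{lemma:first_diff_b} producing the $(\EE[|\tZ{-}Z|^q])^{1/q}$ factor. Squaring gives $\EE[R^2] \leq c\,h\,\bigl(\EE[|\tZ{-}Z|^q]\bigr)^{2/q}$, completing the bound.

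The main obstacle is the triple-H\"older bookkeeping: one must choose conjugate exponents carefully so that the polynomial growth factor lands on an $L^{\text{large}}$ norm (where its moments are finite), the strong-order factor from Lemma \ref{lemma:delta_h} can also be taken in an $L^{\text{large}}$ norm (which still yields $h^{p/2}$ for any finite $p$), and the $\tZ{-}Z$ factor lands on exactly the $L^q$ norm allowed by the hypothesis. Since $q > 2$ is strict, there is just enough room to pick these exponents, and any slack can be absorbed into the constant~$c$.
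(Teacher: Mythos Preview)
Your proposal is correct and follows essentially the same route as the paper: apply Lemma~\ref{lemma:MVT2} to get the $f'(\xi)$ term plus remainder $R$, bound the variance by $2\,\EE[(f'(\xi))^2|\cdots|^2]+2\,\EE[R^2]$, handle the first term by H\"older with an intermediate exponent $2<p<q$ feeding into Lemma~\ref{lemma:second_diff}, and handle $\EE[R^2]$ by a three-way H\"older split that puts the polynomial weight and the across-level difference on large exponents while reserving the $L^q$ slot for the $\tZ{-}Z$ factor via Lemma~\ref{lemma:first_diff_b}. The paper makes the same choices, with the specific conjugate triple $(1/2-1/q,\,1/2-1/q,\,2/q)$ for the remainder term.
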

\begin{proof}
Given the assumptions on $f$, we are able to follow the proof 
of Lemma \ref{lemma:MVT2} to obtain
\[
f(\hXf_N){-}f(\hXc_N) {-} f(\tXf_N){+}f(\tXc_N)
= f'(\xi)\ (\hXf_N{-}\hXc_N{-}\tXf_N{+}\tXc_N)
+ R,
\]
where
\[
|f'(\xi)| \leq  L_f \left(1 + |\hXf_N|^r + |\hXc_N|^r + |\tXf_N|^r + |\tXc_N|^r \right)
\]
and
\begin{eqnarray*}
|R| &\leq& \fracs{1}{2} L'_f \left(1 + |\hXf_N|^r + |\hXc_N|^r + |\tXf_N|^r + |\tXc_N|^r \right)
\\ && \times \  \left( |\hXf_N{-}\hXc_N| + |\tXf_N{-}\tXc_N| \right)\, 
                \left( |\hXf_N{-}\tXf_N| + |\hXc_N{-}\tXc_N| \right).
\end{eqnarray*}
Hence,
\[
\VV\left[ f(\hXf_N){-}f(\hXc_N) {-} f(\tXf_N){+}f(\tXc_N)\right] \leq 
2\, \EE\left[\, (f'(\xi))^2|\hXf_N{-}\hXc_N{-}\tXf_N{+}\tXc_N|^2\right]
+ 2\, \EE[R^2].
\]
Due to H\"older's inequality, 
\begin{eqnarray*}
\lefteqn{
\EE\left[\, |\hXf_N|^{2r} \, |\hXf_N{-}\hXc_N|^2 \, |\hXf_N{-}\tXf_N|^2 \right] 
}
\\ &\leq& 
\left( \EE[\,|\hXf_N|^{2r/(1/2 - 1/q)}]\right)^{1/2 - 1/q}
\left( \EE[\,|\hXf_N{-}\hXc_N|^{2/(1/2 - 1/q)}]\right)^{1/2 - 1/q}
\left( \EE[\,|\hXf_N{-}\tXf_N|^q] \right)^{2/q}
\end{eqnarray*}
Note that $\EE[ \, |\hXf_N|^{2r/(1/2 - 1/q)}]$ is finite and uniformly bounded due to 
Corollary \ref{corollary:bounds}.
The other terms in $\EE[R^2]$ can be bounded similarly, and therefore due to the bounds
from Lemmas \ref{lemma:first_diff} and \ref{lemma:delta_h} there exists a constant $c$,
not depending on $h$ or $\EE[\,|\tZ{-}Z|^q]$, such that
\[
\EE[R^2] \leq c\, h \left(\EE[\,|\tZ{-}Z|^q]\right)^{2/q}.
\]
Similarly, choosing $p$ such that $2\!<\!p\!<\!q$, then due to H\"older's inequality, 
\begin{eqnarray*}
\lefteqn{
\EE\left[\, (f'(\xi))^2|\hXf_N{-}\hXc_N{-}\tXf_N{+}\tXc_N|^2\right]
}
\\ &\leq&
\left(\EE[ \, |f'(\xi)|^{2/(1-2/p)}] \right)^{1-2/p}
\left(\EE\left[\,|\hXf_N{-}\hXc_N{-}\tXf_N{+}\tXc_N|^p\right] \right)^{2/p}
\end{eqnarray*}
$\EE[ \, |f'(\xi)|^{2/(1-2/p)}]$ is finite and uniformly bounded due to 
Corollary \ref{corollary:bounds}, and therefore the bound for 
$\EE[\, |\hXf_N{-}\hXc_N{-}\tXf_N{+}\tXc_N|^p]$ from
Lemma \ref{lemma:second_diff} completes the proof.
\end{proof}

In finance applications, put and call options correspond to
$f(x)\equiv \max(K{-}x,0)$ and $\max(x{-}K,0)$, respectively,
with $K{>}0$ being the ``strike''. 
More generally, we can consider functions $f$ which are 
globally Lipschitz with a derivative which exists and is 
continuous everywhere except at a single point $K$.

Heuristically, the four values $\hXf_N, \hXc_N, \tXf_N, \tXc_N$ 
do not differ from each other, or from $X_T$, by more than 
$O(\, \max\{ h^{1/2}, \, (\EE[\, |\tZ{-}Z|^2])^{1/2}\}\, )$.
If $X_T$ has a bounded probability density, then the 
probability that $X_T$ is within this distance of $K$
is $O(\, \max\{ h^{1/2}, \, (\EE[\, |\tZ{-}Z|^2])^{1/2}\}\, )$, 
and in this case the global Lipschitz property for $f$ gives
\[
f(\hXf_N) {-} f(\hXc_N) {-} f(\tXf_N) {+} f(\tXc_N)
= O\left( \min\left\{ h^{1/2}, \, (\EE[\, |\tZ{-}Z|^2])^{1/2} \right\} \right),
\]
with the first term in the minimum coming from 
\[
f(\hXf_N) {-} f(\hXc_N) {-} f(\tXf_N) {+} f(\tXc_N)
= \left( f(\hXf_N) {-} f(\hXc_N) \right) - \left(f(\tXf_N) {+} f(\tXc_N)\right)
\]
together with Lemma \ref{lemma:delta_h}, while the second term comes from
\[
f(\hXf_N) {-} f(\hXc_N) {-} f(\tXf_N) {+} f(\tXc_N)
= \left( f(\hXf_N) {-} f(\tXf_N) \right) - \left(f(\hXc_N) {+} f(\tXc_N)\right)
\]
together with Lemma \ref{lemma:first_diff}.

On the other hand, if $X_T$ is more than this distance from $K$ 
then all four values will be on the same side of $K$ and then
\[
f(\hXf_N) {-} f(\hXc_N) {-} f(\tXf_N) {+} f(\tXc_N)
= O\left( h^{1/2}\, (\EE[\, |\tZ{-}Z|^2])^{1/2} \right).
\]
Consequently,
\begin{eqnarray*}
\lefteqn{
\VV[f(\hXf_N) {-} f(\hXc_N) {-} f(\tXf_N) {+} f(\tXc_N)] 
}
\\ &=& O\left(\max\left\{ h^{1/2},\, (\EE[\, |\tZ{-}Z|^2])^{1/2}\right\} \right)
 \times O\left( \min\left\{ h,\, \EE[\, |\tZ{-}Z|^2] \right\} \right)
 + O\left( h \ \EE[\, |\tZ{-}Z|^2] \right)
\\ &=& O\left( \min\left\{ h\, (\EE[\, |\tZ{-}Z|^2])^{1/2},
                       h^{1/2}\, \EE[\, |\tZ{-}Z|^2] \right\} \right).
\end{eqnarray*}

The bound in the following lemma (for the case $q{\approx}2$) is
slightly weaker, but the proof follows along similar lines in
establishing that the dominant contribution to the variance comes
from samples with $X_T$ near $K$.

In the proof, we will use the notation
\[
g_1(h, \EE[\, |\tZ{-}Z|^q]) \prec g_2(h, \EE[\, |\tZ{-}Z|^q])
\]
for any two strictly positive functions $g_1, g_2$ to mean that there
exists a constant $c\!>\!0$ which does not depend on $h$ or
$\EE[\, |\tZ{-}Z|^q]$ such that
\[
g_1(h, \EE[\, |\tZ{-}Z|^q]) < c\ g_2(h, \EE[\, |\tZ{-}Z|^q]).
\]
Note that if $0\!<\!a\!<\!b$ then
\[
h^b \leq T^{b-a} h^a
~~~ \Longrightarrow~~
h^b \prec h^a,
\]
and likewise, due to Assumption \ref{assumption:Z}, 
\[
\left(\EE[\, |\tZ{-}Z|^q]\right)^b \leq 
\left(\EE[\, |Z|^q] \right)^{b-a} \left(\EE[\, |\tZ{-}Z|^q]\right)^a
~~~ \Longrightarrow~~
\left(\EE[\, |\tZ{-}Z|^q]\right)^b \prec
\left(\EE[\, |\tZ{-}Z|^q]\right)^a.
\]

\begin{lemma}
\label{lemma:polynomial_2}
Suppose that the conditions of Lemma \ref{lemma:polynomial} are slightly
modified so that $f: \RR \rightarrow \RR$ is $C^1(\RR\backslash K)$ and 
there is an exponent $r\!>\!0$ and constants $L_f, L'_f$ such
\begin{eqnarray*}
|f(x)-f(y)| \leq L_f (1+|x|^r+|y|^r)\, |x-y|, && \mbox{for all } x, y\\
|f'(x)-f'(y)| \leq L'_f (1+|x|^r+|y|^r)\, |x-y|, && 
\mbox{if \underline{either}\ } x>y>K \mbox{\ \underline{or}\ } x<y<K,
\end{eqnarray*}
and furthermore $X_T$ has a bounded probability density in the 
neighbourhood of $K$ and therefore there is a constant $c_\rho\!>\!0$
such that for any $D\!>\!0$
\[
\PP[ \, |X_T{-}K| < D ] \leq c_\rho\, D.
\]

Then for any $q\!>\!2$ and any $\delta\!>\!0$,
there exists a constant $c_\delta$ which depends on $X_0$, $a$, $b$, 
$f$, $T$, $q$ 
and $\delta$, but not on $h$ or $\EE[\, |\tZ{-}Z|^q]$, such that
\begin{eqnarray}
\lefteqn{
\hspace{-0.5in}
\VV\left[ f(\hXf_N) {-} f(\hXc_N) {-} f(\tXf_N) {+} f(\tXc_N) \right] 
}
\nonumber \\ &\leq& c_\delta \, \min\left\{ 
h \left(\EE[\, |\tZ{-}Z|^q]\right)^{(1-\delta)/(q+1)}\!,\, 
h^{(1-\delta)/2-1/q} \left(\EE[\, |\tZ{-}Z|^q]\right)^{2/q} \right \}
\label{eq:bound}
\end{eqnarray}
\end{lemma}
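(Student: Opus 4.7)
The natural strategy is a case split around the kink point $K$. Writing $\Delta\equiv f(\hXf_N){-}f(\hXc_N){-}f(\tXf_N){+}f(\tXc_N)$, I would fix a threshold $D>0$ (to be optimised later) and introduce the events $B=\{|X_T{-}K|<D\}$ and $E=\{\max(|\hXf_N{-}X_T|,|\hXc_N{-}X_T|,|\tXf_N{-}X_T|,|\tXc_N{-}X_T|)<D/2\}$. On $B^c\cap E$ all four path endpoints sit at distance at least $D/2$ from $K$, hence on a common side of it, so every intermediate point produced by the Mean Value Theorem or by Lemma \ref{lemma:MVT2} stays on that side and the local polynomial-growth $C^1$ hypothesis on $f$ is in force. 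The proof of Lemma \ref{lemma:polynomial} then goes through restricted to the indicator $\mathbf{1}_{B^c\cap E}$ (each H\"older step there bounds a restricted expectation by unrestricted ones, and $f'(\xi)$ may be defined arbitrarily where the indicator vanishes), giving $\EE[\Delta^2\,\mathbf{1}_{B^c\cap E}]\prec h\,(\EE[|\tZ{-}Z|^q])^{2/q}$.

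For the remaining two pieces $\EE[\Delta^2\,\mathbf{1}_B]$ and $\EE[\Delta^2\,\mathbf{1}_{B^c\cap E^c}]$ I would use H\"older in the form $\EE[\Delta^2\,\mathbf{1}_A]\leq(\EE[|\Delta|^p])^{2/p}\,\PP(A)^{1-2/p}$ for some $2<p<q$, with the gap $q{-}p$ providing the $\delta$-slack. The pointwise input comes from two natural regroupings, $\Delta=(f(\hXf_N){-}f(\tXf_N)){-}(f(\hXc_N){-}f(\tXc_N))$ and $\Delta=(f(\hXf_N){-}f(\hXc_N)){-}(f(\tXf_N){-}f(\tXc_N))$, which via the global polynomial-growth Lipschitz hypothesis bound $|\Delta|$ by $\mathrm{poly}(\cdot)(|\hXf_N{-}\tXf_N|{+}|\hXc_N{-}\tXc_N|)$ or by $\mathrm{poly}(\cdot)(|\hXf_N{-}\hXc_N|{+}|\tXf_N{-}\tXc_N|)$. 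Taking $p$-th moments via H\"older (exponent $q/p$ on the difference factors and its conjugate on the polynomial factor) and invoking Corollary \ref{corollary:bounds} together with Lemmas \ref{lemma:first_diff_b} and \ref{lemma:delta_h}, followed by the elementary interpolation $\min(a,b)\leq a^\gamma b^{1-\gamma}$, yields $(\EE[|\Delta|^p])^{2/p}\prec(\EE[|\tZ{-}Z|^q])^{2\gamma/q}\,h^{1-\gamma}$ for any $\gamma\in[0,1]$.

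The density assumption gives $\PP(B)\leq c_\rho D$; for $\PP(E^c)$, Markov's inequality combined with standard $L^q$-convergence of Euler--Maruyama (for $|\hXf_N{-}X_T|^q$ and its coarse analogue) and Lemma \ref{lemma:first_diff_b} (for $|\tXf_N{-}\hXf_N|^q$ and its coarse analogue) gives $\PP(E^c)\prec(h^{q/2}{+}\EE[|\tZ{-}Z|^q])/D^q$. Combining all three pieces, the $D$-dependent contributions scale as $D^{1-2/p}$ and $D^{-q(1-2/p)}$, and balance at $D\sim(h^{q/2}{+}\EE[|\tZ{-}Z|^q])^{1/(q+1)}$, yielding a total bound of the form $(\EE[|\tZ{-}Z|^q])^{2\gamma/q}\,h^{1-\gamma}\,(h^{q/2}{+}\EE[|\tZ{-}Z|^q])^{(1-2/p)/(q+1)}$ on top of the $h(\EE[|\tZ{-}Z|^q])^{2/q}$ contribution from $B^c\cap E$ (which is dominated by both entries of the $\min$ in (\ref{eq:bound})). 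The main obstacle I anticipate is then purely bookkeeping: choosing $\gamma{=}1$ with $p$ close to $q$ should recover the $h^{(1-\delta)/2-1/q}(\EE[|\tZ{-}Z|^q])^{2/q}$ entry when $(\EE[|\tZ{-}Z|^q])^{2/q}\leq h$, while $\gamma{=}0$ with a different $p$ should recover the $h\,(\EE[|\tZ{-}Z|^q])^{(1-\delta)/(q+1)}$ entry in the opposite regime; verifying in each case that the resulting exponent inequalities can be closed for arbitrarily small $\delta>0$ (paying in $c_\delta$) is the only delicate step.
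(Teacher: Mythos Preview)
Your approach is essentially the paper's: both split into a ``good'' event where all four endpoints lie on one side of $K$ (handled by the argument of Lemma \ref{lemma:polynomial}) and a ``bad'' event treated by H\"older against a small probability bounded via the density assumption and Markov's inequality. The paper differs from your sketch mainly in tactics: rather than optimising $D$ once and interpolating between the two regroupings, it makes a case split at $h^{1/2}=(\EE[|\tZ{-}Z|^q])^{1/(q+1)}$ (precisely where the two entries of the $\min$ coincide), prescribes a regime-specific $D$, and uses only one regrouping of $\Delta$ per case.

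There is, however, a concrete gap in the exponent bookkeeping you correctly flag as delicate. In your Markov bound for $\PP(E^c)$ you use only $L^q$ control on $|\hXf_N{-}X_T|$, giving $\PP(E^c)\prec (h^{q/2}+\EE[|\tZ{-}Z|^q])/D^q$. In the regime $\EE[|\tZ{-}Z|^q]\leq h^{q/2}$ (your $\gamma{=}1$ case with $p$ near $q$), this forces the balanced $D\sim h^{q/(2(q+1))}$ and yields a final $h$-exponent of $(q{-}2)/(2(q+1))$; the target exponent is $(1{-}\delta)/2-1/q\approx(q{-}2)/(2q)$, and the shortfall $(q{-}2)/(2q(q{+}1))$ is \emph{fixed}, not $\delta$-small, so the inequality cannot be closed for arbitrarily small $\delta$. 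The remedy, which the paper uses, is that $\hXf_N{-}X_T$ (unlike $\tXf_N{-}\hXf_N$) has moments of \emph{all} orders: applying Markov with an exponent $s>1/\delta$ renders the strong-convergence contribution to $\PP(E^c)$ negligible, so that one may take $D\sim h^{(1-\delta)/2}$ and the exponents close. Once this is fixed, your regime boundary should also shift from $h^{q/2}=\EE[|\tZ{-}Z|^q]$ to the crossover $h^{(q+1)/2}=\EE[|\tZ{-}Z|^q]$ of the two entries in the $\min$.
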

\begin{proof}
The proof follows an approach used previously in the analysis of MLMC variance
for similar options in the context of multi-dimensional SDEs (Theorem 5.2 in \cite{gs14}).

The proof is given for $0\!<\!\delta\!< 1{-}1/q$.  If the assertion 
is true for $\delta$ in this range then it also holds for larger values.

If we define the events $A$ and $B$ as
\[
A: | X_T - K | \leq D, ~~~~
B: \max\left\{
 |\hXf_N {-} X_T|,  |\hXc_N {-} X_T|,  |\tXf_N {-} \hXf_N|,  |\tXc_N {-} \hXc_N|, 
 \right\} \geq D/2,
\]
for some choice of constant $D\!>\!0$, and define
$\Delta f \equiv  f(\hXf_N) {-} f(\hXc_N) {-} f(\tXf_N) {+} f(\tXc_N)$, then
\[
\VV\left[ \Delta f \right] 
\leq \EE\left[ (\Delta f)^2 \one_{A\cup B} \right]
  +  \EE\left[ (\Delta f)^2 \one_{A^c\cap B^c} \right]
\]
where $A^c, B^c$ are the complements of $A$ and $B$, and $\one_C$ is the indicator function
which has value $1$ if the random sample $\omega\in C$, and 0 otherwise.
Note that if $\omega \!\in\! A^c\!\cap\! B^c$ then the four values 
$\hXf_N, \hXc_N, \tXf_N, \tXc_N$  are all on the same side of $K$, and therefore
the proof in Lemma \ref{lemma:polynomial} means that 
\begin{equation}
\EE\left[ (\Delta f)^2 \one_{A^c\cap B^c} \right]
\prec h \left(\EE[\, |\tZ{-}Z|^q]\right)^{2/q}
\prec h \left(\EE[\, |\tZ{-}Z|^q]\right)^{(1-\delta)/(q+1)}
\label{eq:bound1}
\end{equation}
so $\EE\left[ (\Delta f)^2 \one_{A^c\cap B^c} \right]$ is not the dominant
contributor to the bound in (\ref{eq:bound}).

To address the other term, $\EE\left[ (\Delta f)^2 \one_{A\cup B} \right]$
we begin by noting that the two terms in the bound on the r.h.s.~of
(\ref{eq:bound}) are equal when
$h^{1/2} = (\EE[\, |\tZ{-}Z|^q])^{1/(q+1)}$.

\vspace{0.2in}

\underline{Case A}: $h^{1/2} \leq (\EE[\, |\tZ{-}Z|^q])^{1/(q+1)}$.

In this case, we set $D\!=\!(\EE[\, |\tZ{-}Z|^q])^{(1-\delta/2)/(q+1)}$,
and by H\"older's inequality we have
\[
\EE\left[ (\Delta f)^2 \one_{A\cup B} \right]
\leq \left( \EE[ |\Delta f|^{2/\delta'}] \right)^{\delta'}
     \left( \EE[\one_{A\cup B}] \right)^{1-\delta'}
\leq \left( \EE[ |\Delta f|^{2/\delta'}] \right)^{\delta'}
     \left( \PP[A] + \PP[B] \right)^{1-\delta'},
\]
where $\delta'=\delta/(2{-}\delta)$ so that
$1{-}\delta'=(1{-}\delta)/(1{-}\delta/2)$.

Due to the assumed bounded density for $X_T$, we have
$\PP[A] \prec D$.  Also, 
\begin{eqnarray*}
\PP[B] &\leq&  ~
\PP[\, |\hXf_N {-} X_T| > D/2] +
\PP[\, |\hXc_N {-} X_T| > D/2]
\\ & & \!\!\!\! +\
\PP[\, |\tXf_N {-} \hXf_N| > D/2] +
\PP[\, |\tXc_N {-} \hXc_N| > D/2].
\end{eqnarray*}
By the Markov inequality, together with the standard strong convergence
results,
\[
\PP[\, |\hXf_N {-} X_T| \!>\! D/2] 
\leq \frac{\EE[\, |\hXf_N {-} X_T|^p]}{(D/2)^p}
\prec \frac{h^{p/2}}{D^p}
\prec (\EE[\, |\tZ{-}Z|^q])^{p\delta/(2q+2)}
\prec D,
\]
by choosing $p>2/\delta - 1$.  A similar bound follows for
$\PP[\, |\hXc_N {-} X_T| \!>\! D/2]$.
In addition, the Markov inequality, together with Lemma \ref{lemma:first_diff}, gives
\[
\PP[\, |\tXf_N {-} \hXf_N| \!>\! D/2] 
\leq \frac{\EE[\, |\tXf_N {-} \hXf_N|^q]}{D^q}
\prec \frac{\EE[\, |\tZ{-}Z|^q]}{D^q} 
\prec D,
\]
and a similar bound holds for $\PP[\, |\tXc_N {-} \hXc_N| \!>\! D/2]$.
The conclusion from this is that $\PP[B]\prec D$.

Hence,
\[
\left( \PP[A] + \PP[B] \right)^{(1-\delta)/(1-\delta/2)} \prec D^{(1-\delta)/(1-\delta/2)}
 =  (\EE[\, |\tZ{-}Z|^q])^{(1-\delta)/(q+1)}.
\]

In addition, we have
\[
|\Delta f|^{2/\delta'} \leq 2^{2/\delta'-1}\left(
  |f(\hXf_N) {-} f(\hXc_N)|^{2/\delta'} + |f(\tXf_N) {-} f(\tXc_N)|^{2/\delta'}
  \right),
\]
and due to H\"older's inequality and the bounds in Corollary \ref{corollary:bounds}
and Lemma \ref{lemma:delta_h} we have
\begin{eqnarray*}
\EE[\,|f(\tXf_N) {-} f(\tXc_N)|^{2/\delta'}]
&\leq& L_f^{2/\delta'} \left( \EE[\, |1 + c |\tXf_N|^r + c |\tXc_N|^r|^{4/\delta'}]\right)^{1/2}
       \left( \EE[\, |\tXf_N {-} \tXc_N|^{4/\delta'}]\right)^{1/2}
       \\ &\prec& h^{1/\delta'}.
\end{eqnarray*}
There is a similar bound for $\EE[\,|f(\hXf_N) {-} f(\hXc_N)|^{2/\delta'}]$
and hence we have the result that 
$\EE\left[ (\Delta f)^2 \one_{A\cup B} \right]
\prec h \, (\EE[\, |\tZ{-}Z|^q])^{(1-\delta)/(q+1)}$
when $h^{1/2}\leq  (\EE[\, |\tZ{-}Z|^q])^{1/(q+1)}$.

\vspace{0.2in}

\underline{Case B}: $h^{1/2} \geq (\EE[\, |\tZ{-}Z|^q])^{1/(q+1)}$.

In this case we set $D\!=\!h^{(1-\delta)/2}$ and by H\"older's
inequality we have
\begin{eqnarray*}
\EE\left[ (\Delta f)^2 \one_{A\cup B} \right]
&\leq& \left( \EE[ |\Delta f|^{2/(2/q+\delta')}] \right)^{2/q +\delta'}
       \left( \EE[\one_{A\cup B}] \right)^{1-2/q-\delta'}
\\ &\leq& \left( \EE[ |\Delta f|^{2/(2/q+\delta')}] \right)^{2/q+\delta'}
          \left( \PP[A] + \PP[B] \right)^{1-2/q-\delta'},
\end{eqnarray*}
where $\delta'= 2\delta/(q(1{-}\delta))$ so that 
$(1{-}2/q{-}\delta')(1{-}\delta)/2 = (1{-}\delta)/2 - 1/q$.

We again have $\PP[A] \prec D$.  By the Markov inequality,
together with the standard strong convergence results,
\[
\PP[\, |\hXf_N {-} X_T| \!>\! D/2] 
\leq \frac{\EE[\, |\hXf_N {-} X_T|^p]}{(D/2)^p}
\prec \frac{h^{p/2}}{h^{p(1-\delta)/2}}
= h^{p\delta/2}
\prec D,
\]
by choosing $p\!>\!1/\delta$, and a similar bound follows for
$\PP[\, |\hXc_N {-} X_T| \!>\! D/2]$.
In addition, the Markov inequality, together with Lemma \ref{lemma:first_diff}, gives
\[
\PP[\, |\tXf_N {-} \hXf_N| \!>\! D/2] 
\leq \frac{\EE[\, |\tXf_N {-} \hXf_N|^q]}{D^q}
\prec \frac{\EE[\, |\tZ{-}Z|^q]}{D^q} 
\prec \frac{h^{(q+1)/2}}{h^{q(1-\delta)/2}}
= h^{1/2+q\delta/2}
\prec D,
\]
and a similar bound holds for $\PP[\, |\tXc_N {-} \hXc_N| \!>\! D/2]$.
The conclusion from this is that $\PP[B]\prec D$, as before, and so
\[
\left( \PP[A] + \PP[B] \right)^{1-2/q-\delta'}
\prec D^{1-2/q-\delta'}
=  h^{(1-2/q-\delta')(1-\delta)/2}
= h^{(1-\delta)/2 - 1/q}.
\]

In addition, defining $\delta''=\delta'/(2/q{+}\delta')$ so that
$2/(2/q+\delta')=q(1{-}\delta'')$, we have
\[
|\Delta f|^{q(1{-}\delta'')} \leq 2^{q(1{-}\delta'')-1}\left(
  |f(\tXf_N) {-} f(\hXf_N)|^{q(1{-}\delta'')} + |f(\tXc_N) {-} f(\hXc_N)|^{q(1{-}\delta'')}
  \right)
\]
and due to H\"older's inequality and the bounds in Corollary \ref{corollary:bounds}
and Lemma \ref{lemma:delta_h} we have
\begin{eqnarray*}
\lefteqn{\hspace*{-0.5in}
\EE[\, |f(\tXf_N) {-} f(\hXf_N)|^{q(1-\delta'')}]
}
\\ \hspace*{0.5in}
  &\leq& L_f^{q(1-\delta'')} \left( \EE[\, |1 {+} c |\tXf_N|^r {+} c |\hXf_N|^r|^{q(1-\delta'')/\delta''}]\right)^{\delta''}
  \!     \left( \EE[\, |\tXf_N {-} \hXf_N|^q]\right)^{1-\delta''}
\\ &\prec&  (\EE[\, |\tZ{-}Z|^q])^{1-\delta''},
\end{eqnarray*}
\[
\hspace*{-0.5in}  
\Longrightarrow ~~ \left( \EE[\,|f(\tXf_N) {-} f(\hXf_N)|^{2/(2/q+\delta')}] \right)^{2/q+\delta'}
\prec\ (\EE[\, |\tZ{-}Z|^q])^{2/q}.
\]
There is a similar bound for $\EE[\, |f(\tXc_N) {-} f(\hXc_N)|^{q(1-\delta'')}]$
and hence we have the result that 
$\EE\left[ (\Delta f)^2 \one_{A\cup B} \right]
\prec h^{(1-\delta)/2-1/q} (\EE[\, |\tZ{-}Z|^q])^{2/q}$
when $h^{1/2} \geq (\EE[\, |\tZ{-}Z|^q])^{1/(q+1)}$.

Combining the bounds from cases A and B with (\ref{eq:bound1}),
we obtain the desired final result.
\end{proof}

\section{Numerical results}

Our numerical tests are for the simplest possible example
of Geometric Brownian Motion,
\begin{equation*}
\label{eqt:gbm_sde}
\D X_t = \mu\, X_t\, \D t + \sigma\, X_t\, \D W_t.
\end{equation*}
In our simulations we take $\mu\!=\!0.05$, $\sigma\!=\!0.2$,
$T\!=\!1$, and $X_0\!=\!1$. The coarsest level $\ell\!=\!0$
uses a single time step, and higher levels use $4^\ell$
timesteps on level $\ell$ so that $h_\ell \!=\! 2^{-2l} $.

For the Normal random variables we use the 
approximations discussed in section \ref{sec:approximations}:
\begin{enumerate}\setlength{\itemsep}{-0.02in}
\item the quantised piecewise constant approximation using 1024 intervals;
\item the piecewise linear approximation on 16 dyadic intervals
on $(0,1/2)$;
\item a degree 7 polynomial approximation.
\end{enumerate}
Note that the values of $\EE[\, | \tZ{-}Z|^2 ]$ for these are
$1.5\!\times\!10^{-4}$, $4\!\times\!10^{-5}$ and $2.6\!\times\!10^{-3}$, respectively.

\begin{figure}[tb]
\centering
\includegraphics[width=0.7\linewidth]{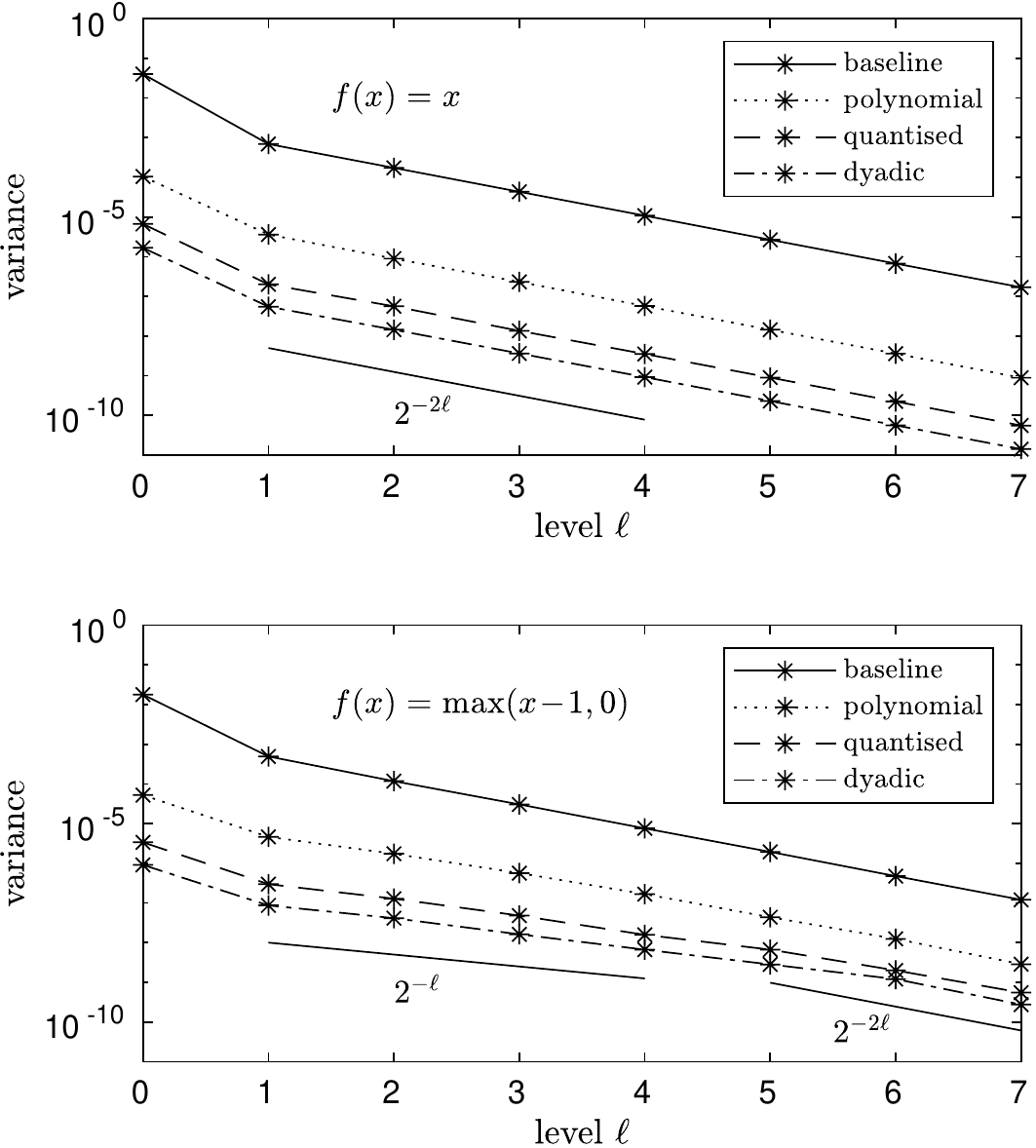}
\caption{MLMC variances for two different output functions, with
  reference lines proportional to $2^{-2\ell}$ and $2^{-\ell}$.}
\label{fig:numeric_results}
\end{figure}

Figure \ref{fig:numeric_results} presents results for all
three approximations for two different output functions,
$f(x) {\equiv} x$ and $f(x) {\equiv} \max(x{-}1,0)$.
In all cases the variances for $\VV[\tP_\ell{-}\tP_{\ell-1}]$
using the approximate Normals are visually indistinguishable
from $\VV[\hP_\ell{-}\hP_{\ell-1}]$ which is the line labelled
as ``baseline''; the other three lines are the variances
$\VV[(\hP_\ell{-}\hP_{\ell-1}) - (\tP_\ell{-}\tP_{\ell-1})]$
for the three approximations.

For the first case, $f(x) {\equiv} x$, by choosing $q$ close to 2,
Lemma \ref{lemma:polynomial} gives
\[
\tV_\ell \equiv
\VV\left[ (\hP_\ell{-} \hP_{\ell-1}) - (\tP_\ell {-} \tP_{\ell-1}) \right]
\approx O\left( 2^{-2\ell}\, \EE[\, | \tZ{-}Z|^2 ]\right).
\]
The numerical results appear to be consistent with this, with
$\tV_\ell$ decreasing with level approximately proportional to
$2^{-2\ell}$, as indicated by the reference line which is
proportional to $2^{-2\ell}$. For a fixed level $\ell$, the
variation in $\tV_\ell$ between the three different approximations
is roughly proportional to $\EE[\, | \tZ{-}Z|^2 ]$,
with the piecewise linear approximation on dyadic intervals being
the most accurate and hence giving the smallest values for $\tV_\ell$,
and the polynomial approximation being much less accurate leading
to larger values for $\tV_\ell$.

For the second case, $f(x) {\equiv} \max(x{-}1,0)$,
choosing $\delta$ close to zero, Lemma \ref{lemma:polynomial_2} gives
\[
  \tV_\ell \approx O\left(  \min\left\{ 2^{-2\ell}\, \EE[\, | \tZ{-}Z|^q ]^{1/(q+1)},
   2^{-(1-2/q)\ell}\, \EE[\, | \tZ{-}Z|^q ]^{2/q} \right\}  \right),
\]
for any $q{>}2$, whereas the earlier heuristic analysis suggested
\[
\tV_\ell \approx O\left( \min\left\{ 2^{-2\ell}\, \EE[\, |\tZ{-}Z|^2]^{1/2},
                       2^{-\ell}\, \EE[\, |\tZ{-}Z|^2] \right\} \right).
\]
The numerical results are plotted with reference lines proportional
to $2^{-\ell}$ and $2^{-2\ell}$.  The results do show a slight change in
the slope reflecting the switch from $O(2^{-\ell})$ to $O(2^{-2\ell})$
in the analysis.

Regarding the overall computational efficiency, as discussed in section
\ref{sec:nested_MLMC}, the CPU implementations using the quantised and
dyadic approximations are approximately 7 times more efficient, so
$\tC_\ell/C_\ell \approx 1/7$.  The quantity
$\sqrt{(C_\ell/\tC_\ell + 1)\, \tV_\ell/V_\ell \,}$ is approximately
0.026 and 0.052 for the output function $f(x){=}x$, using the dyadic and
quantised approximations, respectively, and
0.14 and 0.19 for the output function $f(x){=}\max(x{-}1,0)$, using the
dyadic and quantised approximations. Therefore, in all four cases
the total cost is reduced by a factor which is close to $\tC_\ell/C_\ell$.

\section{Conclusions and future work}

In this paper we have presented a general nested multilevel
Monte Carlo framework which employs approximate random variables
which can be sampled much more efficiently than the true distribution.
As a specific example, we investigated the use of approximate Normal
random variables for an Euler-Maruyama discretisation of a scalar SDE.
A detailed error analysis bounds the variance of the differences
in the SDE path approximations as a function of the error in the
approximate inverse Normal distribution. This analysis is supported
by numerical results for the simplest possible case of Geometric
Brownian Motion.

There are two directions in which we plan to extend this research.
This first is to investigate approximations of other distributions.
Two are of particular interest; one is the Poisson distribution,
which is important for continuous-time Markov processes \cite{ah12,ahs14}
and is simulated using the inverse Normal CDF \cite{giles16},
and the other is the non-central $\chi^2$-distribution which is
important for simulating the  \emph{Cox-Ingersoll-Ross} (CIR) process
which is used extensively in computational finance.  In both cases, the 
computational savings may be greater, but it may prove to be very 
difficult to carry out a detailed numerical analysis of the resulting 
MLMC variances.

The second direction is to use reduced precision computer arithmetic
in performing the calculations of $\tXn$, further reducing the cost
of the approximate calculations.  This builds on prior research by
others, implementing MLMC methods on FPGAs (field-programmable gate
arrays) \cite{bswohrkk14,ohrbswk15}.
The rounding error effect of finite precision arithmetic can be
modelled as an additional random error at each timestep; it is expected
that on the coarsest levels with few timesteps this additional error
will be small, but on the finest levels it may become significant and
so perhaps such levels should be computed using single precision.

\section*{Acknowledgements}

This publication is based on work supported by both the ICONIC
EPSRC Programme Grant (EP/P020720/1)
and the EPSRC Centre for Doctoral Training in Industrially Focused
Mathematical Modelling (EP/L015803/1) in collaboration with Arm Ltd.


\bibliographystyle{unsrt}
\bibliography{mlmc,mc}

\begin{thebibliography}{10}

\bibitem{heinrich98}
S.~Heinrich.
\newblock {M}onte {C}arlo complexity of global solution of integral equations.
\newblock {\em Journal of Complexity}, 14(2):151--175, 1998.

\bibitem{giles08}
M.B. Giles.
\newblock Multilevel {M}onte {C}arlo path simulation.
\newblock {\em Operations Research}, 56(3):607--617, 2008.

\bibitem{cgst11}
K.A. Cliffe, M.B. Giles, R.~Scheichl, and A.~Teckentrup.
\newblock Multilevel {M}onte {C}arlo methods and applications to elliptic
  {PDE}s with random coefficients.
\newblock {\em Computing and Visualization in Science}, 14(1):3--15, 2011.

\bibitem{bsz11}
A.~Barth, C.~Schwab, and N.~Zollinger.
\newblock Multi-level {M}onte {C}arlo finite element method for elliptic {PDE}s
  with stochastic coefficients.
\newblock {\em Numerische Mathematik}, 119(1):123--161, 2011.

\bibitem{gr12}
M.B. Giles and C.~Reisinger.
\newblock Stochastic finite differences and multilevel {M}onte {C}arlo for a
  class of {SPDEs} in finance.
\newblock {\em SIAM Journal of Financial Mathematics}, 3(1):572--592, 2012.

\bibitem{ah12}
D.~Anderson and D.J. Higham.
\newblock Multi-level {M}onte {C}arlo for continuous time {M}arkov chains, with
  applications in biochemical kinetics.
\newblock {\em SIAM Multiscale Modelling and Simulation}, 10(1):146--179, 2012.

\bibitem{ahs14}
D.~Anderson, D.J. Higham, and Y.~Sun.
\newblock Complexity of multilevel {M}onte {C}arlo tau-leaping.
\newblock {\em SIAM Journal of Numerical Analysis}, 52(6):3106--3127, 2014.

\bibitem{hss13}
V.H. Hoang, C.~Schwab, and A.M. Stuart.
\newblock Complexity analysis of accelerated {MCMC} methods for {B}ayesian
  inversion.
\newblock {\em Inverse Problems}, 29(8), 2013.

\bibitem{sst17}
R.~Scheichl, A.~Stuart, and A.~Teckentrup.
\newblock Quasi-{M}onte {C}arlo and multilevel {M}onte {C}arlo methods for
  computing posterior expectations in elliptic inverse problems.
\newblock {\em SIAM/ASA Journal on Uncertainty Quantification}, 5(1):466--492,
  2017.

\bibitem{bhr15}
K.~Bujok, B.~Hambly, and C.~Reisinger.
\newblock Multilevel simulation of functionals of {B}ernoulli random variables
  with application to basket credit derivatives.
\newblock {\em Methodology and Computing in Applied Probability},
  17(3):579--604, 2015.

\bibitem{gg19}
M.B. Giles and T.~Goda.
\newblock Decision-making under uncertainty: using {MLMC} for efficient
  estimation of {EVPPI}.
\newblock {\em Statistics and Computing}, 29(4):739--751, 2019.

\bibitem{gnr15}
M.B. Giles, T.~Nagapetyan, and K.~Ritter.
\newblock Multilevel {M}onte {C}arlo approximation of distribution functions
  and densities.
\newblock {\em SIAM/ASA Journal on Uncertainty Quantification}, 3(1):267--295,
  2015.

\bibitem{bc16}
C.~Bierig and A.~Chernov.
\newblock Approximation of probability density functions by the multilevel
  {M}onte {C}arlo {M}aximum {E}ntropy method.
\newblock {\em Journal of Computational Physics}, 314:661--681, 2016.

\bibitem{up15}
E.~Ullmann and I.~Papaioannou.
\newblock Multilevel estimation of rare events.
\newblock {\em SIAM Journal on Uncertainty Quantification}, 3(1):922–953,
  2015.

\bibitem{ehm16}
D.~Elfverson, F.~Hellman, and A.~M{\aa}lqvist.
\newblock A multilevel {M}onte {C}arlo method for computing failure
  probabilities.
\newblock {\em SIAM/ASA Journal on Uncertainty Quantification}, 4(1):312--330,
  2016.

\bibitem{giles15}
M.B. Giles.
\newblock Multilevel {M}onte {C}arlo methods.
\newblock {\em Acta Numerica}, 24:259--328, 2015.

\bibitem{ghmr19}
M.B. Giles, M.~Hefter, L.~Mayer, and K.~Ritter.
\newblock Random bit quadrature and approximation of distributions on {H}ilbert
  spaces.
\newblock {\em Foundations of Computational Mathematics}, 19(1):205--238, 2019.

\bibitem{ghmr19b}
M.B. Giles, M.~Hefter, L.~Mayer, and K.~Ritter.
\newblock Random bit multilevel algorithms for stochastic differential
  equations.
\newblock {\em Journal of Complexity}, 2019.

\bibitem{mss15}
E.H. M{\"u}ller, R.~Scheichl, and T.~Shardlow.
\newblock Improving multilevel {M}onte {C}arlo for stochastic differential
  equations with application to the {L}angevin equation.
\newblock {\em Royal Society Proceedings A}, 471(2176), 2015.

\bibitem{bn17}
D.~Belomestny and T.~Nagapetyan.
\newblock Multilevel path simulation for weak approximation schemes with
  application to {L\'e}vy-driven {SDE}s.
\newblock {\em Bernoulli}, 23(2):927--950, 2017.

\bibitem{sheridanmethven2020approximating}
M.B. Giles and O.~Sheridan-Methven.
\newblock {A}pproximating inverse cumulative distribution functions to produce
  approximate random variables, 2020.
\newblock arXiv 2012.09715.

\bibitem{sheridan-methven2020thesis}
O.~Sheridan-Methven.
\newblock {\em {N}ested multilevel {M}onte {C}arlo methods and a modified
  {E}uler-{M}aruyama scheme utilising approximate {G}aussian random variables
  suitable for vectorised hardware and low-precisions}.
\newblock PhD thesis, University of {O}xford, 2020.

\bibitem{bakhvalov64}
N.S. Bakhvalov.
\newblock Optimal convergence bounds for quadrature processes and integration
  methods of {M}onte {C}arlo type for classes of functions.
\newblock {\em Zh. Vychisl. Mat. i Mat. Fiz.}, 4(4, suppl.):5--63, 1964.

\bibitem{ecuyer2016randomized}
P.~l’Ecuyer.
\newblock {R}andomized quasi-{M}onte {C}arlo: {A}n introduction for
  practitioners.
\newblock In {\em International Conference on Monte Carlo and Quasi-Monte Carlo
  Methods in Scientific Computing}, pages 29--52. Springer, 2016.

\bibitem{gks18}
M.B. Giles, F.Y. Kuo, and I.H. Sloan.
\newblock Combining sparse grids, multilevel {MC} and {QMC} for elliptic {PDE}s
  with random coefficients.
\newblock In P.W. Glynn and A.~Owen, editors, {\em Monte Carlo and Quasi-Monte
  Carlo Methods 2016}. Springer, 2018.

\bibitem{hnt16}
A.-L. Haji-Ali, F.~Nobile, and R.~Tempone.
\newblock Multi {I}ndex {M}onte {C}arlo: when sparsity meets sampling.
\newblock {\em Numerische Mathematik}, 132(4):767--806, 2016.

\bibitem{kp92}
P.E. Kloeden and E.~Platen.
\newblock {\em Numerical Solution of Stochastic Differential Equations}.
\newblock Springer, Berlin, 1992.

\bibitem{bdg72}
D.L. Burkholder, B.~Davis, and R.F. Gundy.
\newblock Integral inequalities for convex functions of operators on
  martingales.
\newblock In {\em Proc.~Sixth Berkeley Symposium Math.~Statist.~Prob., Vol II},
  pages 223--240. University of California Press, Berkeley, 1972.

\bibitem{gs14}
M.B. Giles and L.~Szpruch.
\newblock Antithetic multilevel {M}onte {C}arlo estimation for
  multi-dimensional {SDE}s without {L\'e}vy area simulation.
\newblock {\em Annals of Applied Probability}, 24(4):1585--1620, 2014.

\bibitem{giles16}
M.B. Giles.
\newblock Approximation of the inverse {P}oisson cumulative distribution
  function.
\newblock {\em ACM Transactions on Mathematical Software}, 42(1), 2016.

\bibitem{bswohrkk14}
C.~Brugger, C.~de~Schryver, N.~Wehn, S.~Omland, M.~Hefter, K.~Ritter,
  A.~Kostiuk, and R.~Korn.
\newblock Mixed precision multilevel {M}onte {C}arlo on hybrid computing
  systems.
\newblock In {\em Proceedings of the Conference on Computational Intelligence
  for Financial Engineering and Economics}. IEEE, 2014.

\bibitem{ohrbswk15}
S.~Omland, M.~Hefter, K.~Ritter, C.~Brugger, C.~de~Schryver, N.~Wehn, and
  A.~Kostiuk.
\newblock Exploiting mixed-precision arithmetic in a multilevel {M}onte {C}arlo
  approach on {FPGA}s.
\newblock In {\em FPGA Based Accelerators for Financial Applications}.
  Springer, 2015.

\end{thebibliography}

\appendix

\newpage
\section{Mean Value Theorem and a generalisation}

\begin{lemma}[Mean Value Theorem]
\label{lemma:MVT}
If $f: \RR \rightarrow \RR$ is $C^1(\RR)$, then there exists 
$\xi$ which is a positively-weighted average of $x_1, x_2$
(i.e., $\xi = s\, x_1 + (1{-}s) x_2$ for some $0\!<\!s\!<\!1$)
such that
\[
f(x_1) - f(x_2) = (x_1{-}x_2) \, f'(\xi).
\]
\end{lemma}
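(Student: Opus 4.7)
The statement is the classical Mean Value Theorem, so the plan is to reduce to the standard Rolle-style argument and verify that the guaranteed intermediate point can be written as a strict convex combination of $x_1$ and $x_2$.

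First I would dispose of the degenerate case $x_1 = x_2$: then the identity $f(x_1) - f(x_2) = (x_1 - x_2) f'(\xi)$ reads $0 = 0$, which holds for any $\xi$, and in particular for $\xi = \tfrac{1}{2} x_1 + \tfrac{1}{2} x_2$, corresponding to $s = \tfrac{1}{2} \in (0,1)$. So the claim is immediate in this case.

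Next, for $x_1 \neq x_2$, I would assume without loss of generality that $x_1 < x_2$ (the other ordering is handled by swapping the roles). Define the auxiliary function
\[
g(x) = f(x) - f(x_2) - \frac{f(x_1) - f(x_2)}{x_1 - x_2}\,(x - x_2),
\]
which is continuous on $[x_1, x_2]$ and differentiable on $(x_1, x_2)$ since $f \in C^1(\RR)$. A direct check gives $g(x_1) = g(x_2) = 0$, so Rolle's theorem produces $\xi \in (x_1, x_2)$ with $g'(\xi) = 0$, i.e.\ $f'(\xi) = (f(x_1) - f(x_2))/(x_1 - x_2)$, which rearranges to the desired identity.

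Finally, since $\xi \in (x_1, x_2)$, it admits the representation $\xi = s\,x_1 + (1-s)\,x_2$ with $s = (x_2 - \xi)/(x_2 - x_1) \in (0,1)$, which is exactly the positively-weighted-average form demanded by the statement. The main (only) technical ingredient is Rolle's theorem applied to $g$; there is no real obstacle here, as the lemma is essentially a restatement of the classical Mean Value Theorem with the intermediate point made explicit as a strict convex combination.
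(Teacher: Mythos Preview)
Your proof is correct and entirely standard. The paper does not actually supply a proof of this lemma; it is stated without proof as the classical Mean Value Theorem, so there is nothing to compare against.
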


\begin{lemma}
\label{lemma:MVT2}
If $f: \RR \rightarrow \RR$ is $C^1(\RR)$, and $f'$ is Lipschitz 
continuous with Lipschitz constant $L'_f$ then there exists $\xi$ 
which is a positively-weighted average of $x_1, x_2, x_3, x_4$ such
that
\[
f(x_1) - f(x_2) - f(x_3) + f(x_4) = 
(x_1 - x_2 - x_3 + x_4) f'(\xi) + R,
\]
where
\[
|R| \leq \fracs{1}{2} L'_f (|x_1{-}x_2| + |x_3{-}x_4|)\ (|x_1{-}x_3| + |x_2{-}x_4|).
\]
\end{lemma}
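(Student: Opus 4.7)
The plan is to reduce the four-point difference to a one-variable mean value calculation by introducing a path that simultaneously interpolates the two pairs $(x_1,x_2)$ and $(x_3,x_4)$. Specifically, I would define
\[
\phi(t) := f(x_2 + t(x_1{-}x_2)) - f(x_4 + t(x_3{-}x_4)), \qquad t \in [0,1],
\]
so that $\phi(1) - \phi(0) = f(x_1) - f(x_2) - f(x_3) + f(x_4)$. Since $f \in C^1(\RR)$, $\phi$ is $C^1$, and applying Lemma \ref{lemma:MVT} on $[0,1]$ produces some $t^* \in (0,1)$ with
\[
f(x_1) - f(x_2) - f(x_3) + f(x_4) \;=\; \phi'(t^*) \;=\; f'(\alpha)\,(x_1{-}x_2) \,-\, f'(\beta)\,(x_3{-}x_4),
\]
where $\alpha := x_2 + t^*(x_1{-}x_2)$ is a positive-weighted average of $\{x_1, x_2\}$ and $\beta := x_4 + t^*(x_3{-}x_4)$ is a positive-weighted average of $\{x_3, x_4\}$.

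Next I would take $\xi := \fracs{1}{2}(\alpha + \beta)$, which expands to $\fracs{t^*}{2} x_1 + \fracs{1-t^*}{2} x_2 + \fracs{t^*}{2} x_3 + \fracs{1-t^*}{2} x_4$, manifestly a positive-weighted average of the four points. Substituting $\pm f'(\xi)$ appropriately into the expression for $\phi'(t^*)$ gives
\[
R \;=\; (x_1{-}x_2)\bigl(f'(\alpha) - f'(\xi)\bigr) \,-\, (x_3{-}x_4)\bigl(f'(\beta) - f'(\xi)\bigr),
\]
and since $|\alpha - \xi| = |\beta - \xi| = \fracs{1}{2}|\alpha - \beta|$, Lipschitz continuity of $f'$ yields
\[
|R| \;\leq\; \fracs{1}{2}\, L'_f \, \bigl(|x_1{-}x_2| + |x_3{-}x_4|\bigr)\, |\alpha - \beta|.
\]
Writing $\alpha - \beta = (1{-}t^*)(x_2{-}x_4) + t^*(x_1{-}x_3)$ as a convex combination, the triangle inequality gives $|\alpha - \beta| \leq |x_1{-}x_3| + |x_2{-}x_4|$, which closes the proof.

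The only real subtlety is the symmetric choice $\xi = \fracs{1}{2}(\alpha + \beta)$. The more obvious choices $\xi = \alpha$ or $\xi = \beta$ do yield valid representations, but give bounds with a single factor $|x_3{-}x_4|$ or $|x_1{-}x_2|$ in place of the symmetric sum, and no factor of $\fracs{1}{2}$. Balancing the two contributions via the midpoint is what produces the stated symmetric bound; once that is chosen, everything else is routine algebra and the Lipschitz estimate.
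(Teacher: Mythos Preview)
Your proof is correct, and it follows a genuinely different route from the paper's.

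The paper applies the Mean Value Theorem twice, separately to $f(x_1)-f(x_2)$ and $f(x_3)-f(x_4)$, obtaining intermediate points $\xi_1\in[x_1,x_2]$ and $\xi_2\in[x_3,x_4]$ with no relation between them. It then uses the Intermediate Value Theorem on the continuous function $f'$ to find $\xi$ between $\xi_1$ and $\xi_2$ with $f'(\xi)=\fracs{1}{2}(f'(\xi_1)+f'(\xi_2))$. Because $\xi_1,\xi_2$ are uncorrelated, bounding $|\xi_1-\xi_2|$ by $|x_1{-}x_3|+|x_2{-}x_4|$ requires first reducing (by swapping $x_2\leftrightarrow x_3$) to the case $|x_1{-}x_2|+|x_3{-}x_4|\leq |x_1{-}x_3|+|x_2{-}x_4|$.

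Your construction instead applies the Mean Value Theorem once to the auxiliary function $\phi$, which forces the two intermediate points $\alpha,\beta$ to share the same barycentric parameter $t^*$. This makes $\alpha-\beta$ an explicit convex combination of $x_1{-}x_3$ and $x_2{-}x_4$, so the bound $|\alpha-\beta|\leq |x_1{-}x_3|+|x_2{-}x_4|$ is immediate and no symmetry reduction is needed. Your choice $\xi=\fracs{1}{2}(\alpha+\beta)$ is also more explicit than the paper's IVT argument: it gives $\xi$ directly as a convex combination of the four points with known weights. Overall your argument is slightly shorter and more constructive, at the cost of introducing the interpolation function $\phi$; the paper's version stays closer to a ``bare-hands'' repeated MVT but pays for it with the WLOG step.
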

\begin{proof}
Without loss of generality, we can assume 
\begin{equation}
|x_1{-}x_2| + |x_3{-}x_4|\ \leq\ |x_1{-}x_3| + |x_2{-}x_4|,
\label{eq:assmp}
\end{equation}
since otherwise we can just swap $x_2$ and $x_3$.

Now, using Lemma \ref{lemma:MVT} we get
\begin{eqnarray*}
f(x_1) - f(x_2) &{=}& (x_1{-}x_2) \, f'(\xi_1),\\
f(x_3) - f(x_4) &{=}& (x_3{-}x_4) \, f'(\xi_2),
\end{eqnarray*}
where $\xi_1$ and $\xi_2$ are positively-weighted averages 
of $x_1, x_2$ and $x_3, x_4$, respectively.
Taking the difference gives
\[
f(x_1) - f(x_2) - f(x_3) + f(x_4) = 
  \fracs{1}{2} (x_1 {-} x_2 {-} x_3 {+} x_4)  (f'(\xi_1) {+} f'(\xi_2)) + R,
\]
where
\[
R = \fracs{1}{2} (x_1 {-} x_2 {+} x_3 {-} x_4)  (f'(\xi_1) {-} f'(\xi_2)).
\]
Since $f'$ is continuous, there exists an $\xi$ which is a positively-weighted
of $\xi_1$ and $\xi_2$, and hence of $x_1, x_2, x_3, x_4$ such that
\[
\fracs{1}{2}  (f'(\xi_1) + f'(\xi_2)) = f'(\xi).
\]
Note that
\[
\xi_1 - \xi_2 = (\xi_1 - \fracs{1}{2}(x_1+x_2)) + (\xi_2 - \fracs{1}{2}(x_3+x_4))
+ ( \fracs{1}{2}(x_1+x_2) - \fracs{1}{2}(x_3+x_4) )
\]
and therefore, due to (\ref{eq:assmp}),
\begin{eqnarray*}
| \xi_1 - \xi_2 | 
&\leq& \fracs{1}{2}|x_1-x_2| + \fracs{1}{2}|x_3-x_4| + \fracs{1}{2}|x_1-x_3| + \fracs{1}{2}|x_2-x_4|\\
&\leq& |x_1-x_3| + |x_2-x_4|
\end{eqnarray*}
Hence, due to the Lipschitz property of $f'$, 
\[
|R| \leq \fracs{1}{2} L'_f (|x_1{-}x_2| + |x_3{-}x_4|)\ (|x_1{-}x_3| + |x_2{-}x_4|).
\]
\end{proof}

\end{document}